\documentclass[notitlepage,11pt,reqno]{amsart}
\usepackage{amssymb,amsmath,mathrsfs,amsthm}
\usepackage[usenames]{color}
\usepackage[breaklinks=true]{hyperref}
\usepackage[letterpaper]{geometry}
\geometry{hmargin={1in, 1in}, vmargin={1in, 1in}}

\newcommand{\bb}{\ensuremath{\mathbb B}}
\newcommand{\be}{\ensuremath{\mathbb E}}
\newcommand{\N}{\ensuremath{\mathbb N}}
\newcommand{\Z}{\ensuremath{\mathbb Z}}
\newcommand{\Q}{\ensuremath{\mathbb Q}}
\newcommand{\R}{\ensuremath{\mathbb R}}
\newcommand{\G}{\ensuremath{\mathscr G}}
\newcommand{\scrc}{\ensuremath{\mathscr C}}

\newcommand{\bp}[1]{\ensuremath{\mathbb P}_\mu \left( #1 \right)}
\newcommand{\comment}[1]{}
\DeclareMathOperator{\Hom}{Hom}

\newtheorem{theorem}[equation]{Theorem}
\newtheorem{prop}[equation]{Proposition}
\newtheorem{cor}[equation]{Corollary}
\newtheorem{lemma}[equation]{Lemma}

\theoremstyle{definition}
\newtheorem{remark}[equation]{Remark}
\newtheorem{definition}[equation]{Definition}
\newtheorem{example}[equation]{Example}

\numberwithin{equation}{section}

\begin{document}
\vspace*{-1mm}

\title[Khinchin--Kahane, L\'evy inequalities; transfer from normed groups
to Banach spaces]{The Khinchin--Kahane and L\'evy inequalities for\\
abelian metric groups, and transfer from\\
normed (abelian semi)groups to Banach spaces}

\author{Apoorva Khare}
\address[A.~Khare]{Indian Institute of Science;
Analysis and Probability Research Group; Bangalore, India}
\email{khare@math.iisc.ac.in}

\author{Bala Rajaratnam}
\address[B.~Rajaratnam]{University of California, Davis, USA;\hfill\break
\hspace*{29.2mm}University of Sydney, Sydney, Australia (visiting)}
\email{brajaratnam01@gmail.com}

\thanks{Partially supported by US Air Force Office of Scientific Research
grant FA9550-13-1-0043, US National Science Foundation under grant
DMS-0906392, DMS-CMG 1025465, AGS-1003823, DMS-1106642,
DMS-CAREER-1352656, Defense Advanced Research Projects Agency DARPA YFA
N66001-111-4131, the UPS Foundation, SMC-DBNKY, a Young Investigator
Award (Infosys Foundation),
Ramanujan Fellowship grant SB/S2/RJN-121/2017, MATRICS grant
MTR/2017/000295, SwarnaJayanti Fellowship grants SB/SJF/2019-20/14 and
DST/SJF/MS/2019/3 from SERB and DST (Govt.~of India), and by grant
F.510/25/CAS-II/2018(SAP-I) from UGC (Govt.~of India).}

\subjclass[2010]{Primary: 60E15, Secondary: 60B15, 46B09}

\keywords{Metric semigroup, Khinchin--Kahane inequality, L\'evy
inequality, transfer principle, normed group, expectation}

\date{\today}

\begin{abstract}
The Khinchin--Kahane inequality is a fundamental result in the
probability literature, with the most general version to date holding in
Banach spaces. Motivated by modern settings and applications, we
generalize this inequality to arbitrary metric groups which are abelian.

If instead of abelian one assumes the group's metric to be a norm (i.e.,
$\mathbb{Z}_{>0}$-homogeneous), then we explain how the inequality
improves to the same one as in Banach spaces.
This occurs via a ``transfer principle'' that helps carry over questions
involving normed metric groups and abelian normed semigroups into the
Banach space framework. 
This principle also extends the notion of the expectation to random
variables with values in arbitrary abelian normed metric semigroups
$\mathscr{G}$. We provide additional applications, including studying
weakly $\ell_p$ $\mathscr{G}$-valued sequences and related Rademacher
series.

On a related note, we also formulate a ``general'' L\'evy inequality,
with two features:
(i)~It subsumes several known variants in the Banach space literature;
and
(ii)~We show the inequality in the minimal framework required to state
it: abelian metric groups.
\end{abstract}
\maketitle

\vspace*{-2mm}

\section{Introduction}

The Khinchin--Kahane inequality is a classical inequality in the
probability literature. It was initially studied by Khinchin \cite{Khi}
in the real case, and later extended by Kahane \cite{Kah2} to normed
linear spaces. A detailed history of the inequality can be found in
\cite{LO}. We begin by presenting a general version of the inequality for
Banach spaces, as well as a sharp constant in some cases.

\begin{definition}
A \textit{Rademacher variable} is a Bernoulli variable that equals $\pm
1$ with probability $\frac{1}{2}$.
\end{definition}

\begin{theorem}[Kahane \cite{Kah2}; Lata{\l}a and Oleszkiewicz \cite{LO}]
For all $p,q \in [1,\infty)$, there exists a universal constant $C_{p,q}
> 0$ depending only on $p,q$, such that for all choices of Banach spaces
$\bb$, finite sets of vectors $x_1, \dots, x_n \in \bb$, and independent
Rademacher variables $r_1,\dots,r_n$,
\begin{equation}\label{Ekhinchin-Banach}
\be \left[ \left\| \sum_{k=1}^n r_k x_k \right\|^q \right]^{1/q}
\leqslant C_{p,q} \cdot \be \left[ \left\| \sum_{k=1}^n r_k x_k
\right\|^p \right]^{1/p}.
\end{equation}

\noindent If moreover $p=1 \leqslant q \leqslant 2$, then 
the constant $C_{1,q} = 2^{1-1/q}$ is optimal.
\end{theorem}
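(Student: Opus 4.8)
The plan is to separate the two assertions: the existence of \emph{some} finite constant $C_{p,q}$ for all $p,q$, and the \emph{optimality} of $C_{1,q}=2^{1-1/q}$ when $1\leqslant q\leqslant 2$. Throughout write $S=\sum_{i=1}^n r_i x_i$ and $\|S\|_p:=\be[\|S\|^p]^{1/p}$. Since $L^p$ norms on a probability space are nondecreasing in $p$, the case $p\geqslant q$ is immediate with constant $1$, and $\|S\|_1\leqslant\|S\|_p$ for every $p\geqslant1$. Hence for existence it suffices to bound a single high moment by the first moment, i.e.\ to produce $C_q$ with $\|S\|_q\leqslant C_q\|S\|_1$, and then take $C_{p,q}=\max(1,C_q)$, since for $p<q$ one has $\|S\|_q\leqslant C_q\|S\|_1\leqslant C_q\|S\|_p$.

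For the moment comparison $\|S\|_q\leqslant C_q\|S\|_1$ I would argue via a tail estimate. The first step is the coefficient bound $M:=\max_i\|x_i\|\leqslant\be\|S\|$: conditioning on all signs except $r_{i_0}$ (where $\|x_{i_0}\|=M$) and using midpoint convexity, $\tfrac12\|S'+x_{i_0}\|+\tfrac12\|S'-x_{i_0}\|\geqslant\|x_{i_0}\|=M$, and averaging gives $\be\|S\|\geqslant M$. The second step is a symmetrization (Hoffmann--J\o rgensen type) doubling inequality: since each summand $r_ix_i$ is symmetric with $\|r_ix_i\|\leqslant M$, one obtains $\mathbb{P}(\|S\|>2t+M)\leqslant 4\,\mathbb{P}(\|S\|>t)^2$ for all $t>0$. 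Choosing a quantile $t_0$ with $\mathbb{P}(\|S\|>t_0)\leqslant1/8$ (e.g.\ $t_0=8\|S\|_1$ by Markov, which also gives $t_0\geqslant M$) and iterating $t_{k+1}=2t_k+M\leqslant3t_k$ yields $4\,\mathbb{P}(\|S\|>t_k)\leqslant(4\cdot\tfrac18)^{2^k}=(\tfrac12)^{2^k}$, a doubly exponential decay along geometrically growing thresholds. Integrating $\be\|S\|^q=q\int_0^\infty t^{q-1}\mathbb{P}(\|S\|>t)\,dt$ then gives $\|S\|_q\leqslant C_q t_0\leqslant C_q'\|S\|_1$. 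I expect the main obstacle here to be the doubling inequality, whose proof rests on L\'evy's reflection/symmetrization lemma for sums of independent symmetric variables.

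For the sharp statement, optimality is the easy half: take $\bb=\R$, $n=2$, $x_1=x_2=1$, so $S=r_1+r_2$ and hence $|S|$ equals $0$ or $2$, each with probability $\tfrac12$. Then $\|S\|_1=\be|S|=1$ while $\be|S|^q=\tfrac12\cdot2^q=2^{q-1}$, so $\|S\|_q=2^{(q-1)/q}=2^{1-1/q}$, which equals $2^{1-1/q}\|S\|_1$ since $\|S\|_1=1$; thus no smaller constant can work.

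It remains to prove the upper bound $\|S\|_q\leqslant2^{1-1/q}\|S\|_1$, equivalently $\be\|S\|^q\leqslant2^{q-1}(\be\|S\|)^q$, for $1\leqslant q\leqslant2$. Here I would follow Lata{\l}a--Oleszkiewicz: after reducing by homogeneity and approximation to finitely many vectors, induct on $n$, peeling off the last sign by writing $S=S'\pm x_n$ conditionally on $S'=\sum_{i<n}r_ix_i$. The crux is an analytic two-point inequality, valid precisely for $q\in[1,2]$, that simultaneously controls the conditional update of $\be\|S\|^q$ and of $(\be\|S\|)^q$; note that the naive pointwise bound $\|S'+x_n\|^q+\|S'-x_n\|^q\leqslant(\|S'+x_n\|+\|S'-x_n\|)^q$ holds but is far too weak, since the outer $q$-th power on $(\be\|S\|)^q$ couples the conditionings globally. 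Establishing this two-point inequality, together with the boundary cases $q=1$ (equality) and $q=2$ (constant $\sqrt2$), is the genuine difficulty, and the restriction $q\leqslant2$ enters exactly where that inequality would otherwise fail.
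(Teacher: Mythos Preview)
The paper does not give a self-contained proof of this classical statement; it is cited from Kahane and Lata{\l}a--Oleszkiewicz. What the paper does prove is the generalization Theorem~\ref{Tkhinchin}, whose proof specializes to the Banach-space case, so the comparison is with that argument.

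Your existence argument is correct and takes a genuinely different route. You use the Hoffmann--J\o rgensen doubling estimate $\mathbb P(\|S\|>2t+M)\leqslant 4\,\mathbb P(\|S\|>t)^2$ together with the bound $M=\max_i\|x_i\|\leqslant\be\|S\|$, and iterate along a geometric sequence of thresholds. The paper instead iterates L\'evy's inequality (Proposition~\ref{Llevyineq}) to obtain $\mathbb P\bigl(\|2^l S\|>4^l r\bigr)\leqslant 4^{2^l-1}\,\mathbb P(\|S\|>r)^{2^l}$ directly, then integrates using Markov. Your approach avoids the extra power of $2$ on $S$ (irrelevant in a Banach space but the source of the factor $2^l$ in~\eqref{Ekhinchin} for general groups); the paper's approach has the advantage of not needing the coefficient bound $M\leqslant\be\|S\|$, which has no obvious analogue in a non-normed metric group.

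For the sharp constant, your optimality example ($n=2$, $x_1=x_2$) is exactly the paper's. However, your plan for the upper bound does not match Lata{\l}a--Oleszkiewicz, despite the attribution. Their argument --- and the paper's Step~3 --- is \emph{not} an induction on $n$ via a two-point conditional inequality for general $q\in[1,2]$. Rather, one first proves the endpoint $q=2$ by a global hypercube inequality: the triangle inequality gives $(n-2)X_\varepsilon\leqslant\sum_{d(\varepsilon,\eta)=1}X_\eta$ for $X_\varepsilon=\|\sum\varepsilon_ix_i\|$, and LO show that any nonnegative function on $\{-1,1\}^n$ satisfying this linear constraint obeys $\be[X^2]\leqslant 2(\be X)^2$. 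One then interpolates for $1\leqslant q\leqslant 2$ via log-convexity of $L^p$ norms: with $\theta=2-2/q$, $\|S\|_q\leqslant\|S\|_2^{\theta}\|S\|_1^{1-\theta}\leqslant(\sqrt2)^{\theta}\|S\|_1=2^{1-1/q}\|S\|_1$. Your ``induct on $n$, peel off the last sign, two-point inequality'' sketch is a different (and vaguer) strategy; as written it is not a proof plan one could execute, and the unspecified two-point inequality is precisely where the difficulty lies. Replacing that paragraph with the $q=2$ hypercube lemma plus interpolation would make the argument complete.
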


\noindent On this note, see also the work of Kalton
\cite[Section~6]{Kalton}, involving (quasi-)Banach spaces with specified
type.

Notice that to state the theorem, one only requires Rademacher -- i.e.,
random symmetric -- sums of vectors (see \cite[\S 2.5]{Kah} for more on
Rademacher series). Thus, it is possible to state the result more
generally than in a normed linear space: in fact, in any abelian group
$\G$ equipped with a translation-invariant metric. Now it is natural to
ask whether a variant of the Khinchin--Kahane inequality holds in this
general (and strictly larger) setting. One of our main results provides a
positive answer to this question; see Theorem~\ref{Tkhinchin}.

In working outside Banach spaces, we are motivated by several reasons,
both classical and modern. Traditionally, probability theory is now
well-established in the Banach space setting; see the classic
text~\cite{LT}. In greater generality, Fourier analysis and Haar measure
for locally compact abelian groups, and metric group-valued random
variables have been well-studied, see e.g.~\cite{Gre,Ru}.
In this vein, it is of interest to prove stochastic inequalities in the
most primitive framework required to state them.
In~\cite{KR4}, we showed such a extension of the Hoffmann-J{\o}rgensen
inequality for all metric semigroups, followed by applications and other
extensions in~\cite{KR3}. The present paper is in a parallel vein,
showing an extension of the Khinchin--Kahane inequality to abelian metric
groups.

There are also modern reasons to work with metric groups. In modern-day
settings, one often studies random variables with values in permutation
groups, or more generally, abelian/compact Lie groups such as
lattices/tori. Moreover, data can be manifold-valued, living in e.g.~real
or complex Lie groups rather than in linear spaces. Other frameworks
arise from the study of large networks, e.g.\ the space of graphons with
the cut-norm \cite{Lo}, or the space of labelled graphs \cite{KR1}. The
latter is a $2$-torsion group, so cannot embed into a Banach space. Thus
there is renewed modern interest in studying probability outside the
Banach space paradigm. Our work lies firmly in this setting.

We now state our novel contributions. The first -- see above -- extends
the Khinchin--Kahane inequality to abelian metric groups
(Theorem~\ref{Tkhinchin} below).
Second, in the course of proving Theorem~\ref{Tkhinchin}, we also provide
a two-fold extension of L\'evy's inequality; see Theorem~\ref{Llevyineq}.
In keeping with the above philosophy, this unifies at once several
existing variants of the inequality in the literature, which to our
knowledge had not been consolidated within a common framework. Moreover,
we show the result in the minimal framework required to state it: for all
abelian metric groups.

In addition to these two results for metric \textit{groups}, we write
down a useful ``transfer principle'' in Theorem~\ref{Tnormed}, which
holds more generally: for normed metric \textit{semigroups}. Note,
probability over metric (semi)groups $\G$ has a fundamental distinction
from Banach spaces: the unavailability of an expectation function. It is
thus natural to seek classes of metric semigroups $\G$ for which the
expectation makes sense for every $\G$-valued random variable.
Theorem~\ref{Tnormed} provides such a class: where $\G$ is in fact
``normed'' (defined below) and abelian -- or merely normed if $\G$ is a
group. In this case the expectation does not necessarily live in $\G$,
but in its ``Banach space closure'' (below).

Theorem~\ref{Tnormed} enables defining linear functionals, operator
spaces, and dual spaces over all abelian normed semigroups $\G$, and
therefore extends the powerful theory of functional analysis to all such
semigroups. Furthermore, we provide several applications of
Theorem~\ref{Tnormed}, including extending the notion of weakly
$\ell_p$-sequences to $\G$-valued random variables; as a consequence,
several results in the probability literature, including those of
Dilworth and Montgomery-Smith \cite{DM} (as well as prior results of
Talagrand) extend to abelian normed semigroups and to all normed
groups. See Section~\ref{Snormed} for these applications of our results;
the assertions about normed groups make use of a fact on bi-invariant
metrics from geometry, which arose from the present body of work, and has
been resolved in a recent Polymath project \cite{Po}.

\section{Khinchin--Kahane inequality and L\'evy inequality for abelian
metric groups}\label{Skhinchin}

We begin by extending the Khinchin--Kahane inequality from Banach spaces
to arbitrary abelian metric groups. We also prove a general version of
L\'evy's inequality, for abelian metric groups.

\subsection{Metric semigroups}\label{Ssemigroup}

Before the main results, we study basic properties of metric
(semi)groups.

\begin{definition}\label{Dsemi}
A \textit{metric semigroup/monoid/group} is defined to be a
semigroup/monoid/group $(\G, \cdot)$ equipped with a metric $d : \G
\times \G \to [0,\infty)$ that is translation-invariant. In other words,
\[
d(ac,bc) = d(a,b) = d(ca,cb), \qquad \forall a,b,c \in \G.
\]
\end{definition}

To begin, note that for such a semigroup $\G$ -- for instance if $\G$ is
abelian -- the following ``triangle inequality'' is straightforward, and
used below without further reference:
\begin{equation}\label{Etriangle}
d(y_1 y_2, z_1 z_2) \leqslant d(y_1, z_1) + d(y_2, z_2), \quad \forall
y_k, z_k \in \G.
\end{equation}

We will also require the following preliminary result and its corollary.

\begin{prop}\label{Pstrict}
Suppose $(\G, \cdot, d)$ is a metric semigroup, and $a,b \in \G$. Then
\begin{equation}\label{Estrict}
d(a,ba) = d(b,b^2) = d(a,ab)
\end{equation}

\noindent is independent of $a \in \G$. Moreover, $\G$ has at most one
idempotent (i.e., $b \in \G$ such that $b^2 = b$). Such an element $b$ is
automatically the unique two-sided identity in $\G$, making it a metric
monoid.
\end{prop}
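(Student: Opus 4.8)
The plan is to deduce everything from the two translation-invariance identities in Definition~\ref{Dsemi}, used in tandem. The key observation is that although a semigroup has neither inverses nor an identity, \emph{right}-invariance already lets one ``cancel'' a common right factor inside $d_\G$, and \emph{left}-invariance lets one insert a common left factor; chaining these two moves is exactly what bridges $d_\G(b, b^2)$ and the quantities $d_\G(a, ab)$, $d_\G(a, ba)$.

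Concretely, to prove the right-hand equality $d_\G(b, b^2) = d_\G(a, ab)$, I would first apply left-invariance with multiplier $a$ to obtain $d_\G(b, b^2) = d_\G(ab, ab^2)$, and then apply right-invariance with multiplier $b$ to the pair $(a, ab)$ to obtain $d_\G(a, ab) = d_\G(ab, ab^2)$. Comparing the two displays yields the claim. The left-hand equality $d_\G(b, b^2) = d_\G(a, ba)$ follows by the mirror-image argument: use right-invariance with multiplier $a$ to get $d_\G(b, b^2) = d_\G(ba, b^2 a)$, and left-invariance with multiplier $b$ to get $d_\G(a, ba) = d_\G(ba, b^2 a)$. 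Since the common value $d_\G(b, b^2)$ visibly involves only $b$, its independence from $a$ is immediate.

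For the statement about idempotents, I would feed an idempotent $b$ (so that $b^2 = b$) into the identities just established. Then $d_\G(a, ba) = d_\G(a, ab) = d_\G(b, b^2) = d_\G(b, b) = 0$ for every $a \in \G$, so $ba = a = ab$ for all $a$; that is, $b$ is a two-sided identity. Uniqueness is then the standard semigroup fact: if $b, b'$ are both two-sided identities then $b = bb' = b'$. Hence there is at most one idempotent, it is automatically the unique two-sided identity, and $\G$ is a metric monoid.

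I do not anticipate a genuine obstacle here; the only point requiring care is that, unlike in a group, one cannot simply strip off a factor by multiplying by its inverse. The resolution---and the one slightly nonobvious step---is to combine a left translation with a right translation so that the two sides meet at the common ``midpoint'' $d_\G(ab, ab^2)$ (respectively $d_\G(ba, b^2 a)$), rather than attempting to cancel directly.
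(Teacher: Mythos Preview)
Your proof is correct and essentially identical to the paper's: you chain a left translation with a right translation to meet at $d_\G(ab,ab^2)$ (resp.\ $d_\G(ba,b^2a)$), exactly as the paper does in a single displayed line, and you deduce that an idempotent is a two-sided identity from~\eqref{Estrict} just as the paper does. The only cosmetic difference is the order in which uniqueness is argued---the paper first shows any two idempotents $b,b'$ coincide via a direct distance computation and then shows $b$ is an identity, whereas you first show any idempotent is an identity and then invoke $b = bb' = b'$---but this is the same argument rearranged.
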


\begin{proof}
Equation~\eqref{Estrict} is immediate using the translation-invariance of
$d$:
\[
d(a,ba) = d(ba, b^2 a) = d(b, b^2) = d(ab, a b^2) =
d(a, ab).
\]

\noindent Next, if $\G$ has idempotents $b,b'$, then using
Equation~\eqref{Estrict},
\[
d(b,b') = d(b^2, bb') = d(b^2, b^2 b') = d(b,bb') = d(b', (b')^2) = 0.
\]

\noindent Hence $b = b'$. Moreover, given such an idempotent $b \in \G$,
compute using Equation~\eqref{Estrict}:
\[
d(a,ba) = d(a,ab) = d(b,b^2) = 0, \quad \forall a \in \G.
\]

\noindent Hence $b$ is automatically the unique two-sided identity in
$\G$.
\end{proof}

\begin{cor}\label{Cstrict}
A set $\G$ is a metric semigroup only if $\G$ is either a metric monoid,
or the set of non-identity elements in a metric monoid $\G'$. This is if
and only if the number of idempotents in $\G$ is one or zero,
respectively. Moreover, the metric monoid $\G'$ is (up to a monoid
isomorphism) the unique smallest element in the class of metric monoids
containing $\G$ as a sub-semigroup.
A finite metric semigroup is a metric group.
\end{cor}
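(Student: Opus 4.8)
The plan is to organize everything around the number of idempotents in $\G$, which by Proposition~\ref{Pstrict} is at most one. If $\G$ has exactly one idempotent $b$, then Proposition~\ref{Pstrict} already tells us $b$ is the unique two-sided identity, so $\G$ is a metric monoid and nothing further is needed. The substance of the first claim is therefore the case of \emph{zero} idempotents, where I would construct the metric monoid $\G'$ by freely adjoining an identity: set $\G' := \G \sqcup \{1\}$, declare $1 \cdot a = a \cdot 1 = a$ for all $a \in \G'$, and keep the old product on $\G$. Associativity involving $1$ is immediate, so $\G'$ is a monoid with $\G = \G' \setminus \{1\}$.

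The key point — and the step I expect to require the most care — is extending $d_\G$ to a translation-invariant metric $d_{\G'}$ on $\G'$. Here Proposition~\ref{Pstrict} does the real work: I would set $d_{\G'}(1,a) = d_{\G'}(a,1) := d_\G(a,a^2)$ for $a \in \G$ and $d_{\G'}(1,1):=0$, noting that by~\eqref{Estrict} this equals $d_\G(b,ab)$ for \emph{every} $b \in \G$, which is exactly the value forced by translation-invariance once $1$ is to be an identity. Symmetry is built in, and positivity of $d_{\G'}(1,a)$ holds precisely because $a$ is not idempotent; the same observation shows $1 \neq a$ for all $a \in \G$, so the adjoined point is genuinely new. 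The triangle inequalities involving $1$ reduce to inequalities in $(\G,d_\G)$: for instance $d_{\G'}(1,a) = d_\G(b, ab) \leqslant d_\G(b,b^2) + d_\G(b^2, ab) = d_{\G'}(1,b) + d_\G(b,a)$, using the metric triangle inequality in $\G$ with midpoint $b^2$ and translation-invariance to rewrite $d_\G(b^2,ab) = d_\G(b,a)$; the remaining cases are analogous. Checking translation-invariance of $d_{\G'}$ against a factor $c \in \G$ again reduces, via~\eqref{Estrict}, to identities of the form $d_\G(c,ac) = d_\G(a,a^2) = d_{\G'}(1,a)$.

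For the minimality statement, I would take any metric monoid $M$ containing $\G$ as a sub-semigroup and first observe that its identity $1_M$ lies outside $\G$ (otherwise $1_M$ would be an idempotent of $\G$, contradicting the zero-idempotent hypothesis). The sub-monoid of $M$ generated by $\G$ is then $\G \cup \{1_M\}$, and translation-invariance in $M$ forces $d_M(1_M,a) = d_M(b,ab) = d_\G(b,ab) = d_\G(a,a^2)$ for any $b \in \G$; comparing with the construction above exhibits a metric-monoid isomorphism $\G' \cong \G \cup \{1_M\} \subseteq M$ fixing $\G$ and sending $1 \mapsto 1_M$. Thus $\G'$ embeds into every such $M$ and is, up to isomorphism, the unique smallest metric monoid containing $\G$.

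Finally, for the finite case, the engine is that translation-invariance makes left and right multiplication $L_c : x \mapsto cx$ and $R_c : x \mapsto xc$ isometries of $\G$, hence injective; when $\G$ is finite they are therefore bijective, so $\G$ is both left- and right-cancellative. A finite semigroup has an idempotent (some power of any element is idempotent), which by Proposition~\ref{Pstrict} is the two-sided identity $1_\G$; surjectivity of $L_c$ and $R_c$ then yields $y,z \in \G$ with $cy = zc = 1_\G$, whence $z = z(cy) = (zc)y = y$ is a two-sided inverse of $c$. Therefore every element of a finite metric semigroup is invertible, and $\G$ is a metric group.
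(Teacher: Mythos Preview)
Your proof is correct and follows essentially the same route as the paper: adjoin an identity when $\G$ has no idempotent, extend the metric via $d_{\G'}(1,a) := d_\G(a,a^2)$, and deduce the finite case from bijectivity of left/right translations. The paper's version is considerably terser --- it simply asserts that $\G'$ is a metric monoid and that the final claim holds because translations are bijections --- whereas you have carefully supplied the triangle-inequality and translation-invariance verifications, the minimality argument via an arbitrary ambient metric monoid $M$, and the idempotent/inverse argument in the finite case; all of these are the natural details behind the paper's sketch.
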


\begin{proof}
Given a semigroup $\G$ that is not already a monoid, to adjoin an
``identity'' element $1'$ one defines: $1' \cdot a = a \cdot 1' := a\
\forall a \in \G' := \G \sqcup \{ 1' \}$. Also extend $d_\G$ to $d_{\G'}
: \G' \times \G' \to [0,\infty)$ via: $d_{\G'}(1',1') = 0$ and
$d_{\G'}(1',b) = d_{\G'}(b,1') := d_\G(b,b^2)$ for $b \in \G$. Then $\G'$
is a metric monoid.
The next assertion follows from Proposition~\ref{Pstrict}. It is
clear that the monoid $\G' \supseteq \G$ is uniquely determined.
The final part holds since left- and right- multiplication by any $a \in
\G$ are bijections.
\end{proof}

\begin{remark}\label{Rstrict}
We will denote the unique metric monoid containing a given metric
semigroup $\G$ by $\G' := \G \cup \{ 1' \}$. Note that the idempotent
$1'$ may already be in $\G$, in which case $\G = \G'$.
One consequence of Corollary~\ref{Cstrict} is that instead of working
with metric semigroups, one can use the associated monoid $\G'$ instead.
(In other words, the (non)existence of the identity is not an issue in
such cases.)
This helps simplify other calculations. For instance, what would usually
be a lengthy, inductive computation now becomes much simpler: 
by the triangle inequality~\eqref{Etriangle}, for all $k,l \geqslant 0$,
\[
d_\G(z_0 \cdots z_k, z_0 \cdots z_{k+l}) = d_{\G'}(1', \prod_{i=1}^l
z_{k+i}) \leqslant \sum_{i=1}^l d_{\G'}(1', z_{k+i}) = \sum_{i=1}^l
d_\G(z_0, z_0 z_{k+i}),
\forall z_0, \dots, z_{k+l} \in \G.
\]
\end{remark}

\subsection{Khinchin--Kahane and L\'evy inequalities}

In the sequel, we will deal mostly with metric (semi)groups $\G$ that are
abelian. Thus we mostly use additive notation -- and only where we do
not, should the reader assume that $\G$ need not \textit{a priori} be
abelian.

Our first main result is a Khinchin--Kahane (type) inequality for
arbitrary abelian metric groups.

\begin{theorem}[Khinchin--Kahane]\label{Tkhinchin}
Given $q \in [1,\infty)$, there exists a universal constant $K_q > 0$
depending only on $q$ such that for all choices of abelian metric groups
$\G$, finite sequences of elements $x_1, \dots, x_n \in \G$ (for any
$n>0$), independent Rademacher variables $r_1, \dots, r_n$,
and scalar $p \in [1,\infty)$,
\begin{equation}\label{Ekhinchin}
\be_\mu \left[ d \left( 0, 2^l \sum_{k=1}^n r_k x_k \right)^q
\right]^{1/q} \leqslant K_q \cdot \be_\mu \left[ d \left( 0,
\sum_{k=1}^n r_k x_k \right)^p \right]^{1/p},
\end{equation}

\noindent where $l \in \Z_{>0}$ is such that $2^{l-1} \leqslant q < 2^l$.
In fact we may choose $K_q = 64 q^2 (q/4)^{1/q}$.
\end{theorem}

Existing variants in the literature fall under the special case where $\G
= \bb$ is a Banach space~\eqref{Ekhinchin-Banach}. Note that the
inequality~\eqref{Ekhinchin} in this more general case does not compare
the same quantities as the classical Khinchin--Kahane
inequality~\eqref{Ekhinchin-Banach} does, and to the best of our
knowledge, is a novel inequality that does not follow from the Banach
space version~\eqref{Ekhinchin-Banach}. (In Section~\ref{Snormed}, we
will see how a ``norm'' on $\G$ updates both the expressions
in~\eqref{Ekhinchin} and the constant $K_q$ to yield the Banach-space
counterparts.)
Also remark for completeness that a separability assumption on $\G$ is
not required, since one may restrict to the subgroup generated by $x_1,
\dots, x_n$.

Theorem~\ref{Tkhinchin} provides an example of ``universal constants''
which help compare $L^p$-norms of sums of independent $\G$-valued
variables, across various $p > 0$. This theme is explored in greater
detail and generality for abelian metric semigroups in related
work~\cite{KR3}.

The proof of Theorem~\ref{Tkhinchin} uses L\'evy's inequality for abelian
metric groups $\G$. To this end, we first define symmetric random
variables and show a general version of L\'evy's inequality over $\G$.

\begin{definition}
If $(\G, +, 0, d)$ is an abelian metric group and $I$ is a totally
ordered finite set, then a tuple $(X_i)_{i \in I}$ of random variables in
$L^0(\Omega,\G)$ is \textit{symmetric} if for all subsets $J \subseteq I$
and all functions $\varepsilon : J \to \{ \pm 1 \}$, the $2^{|J|}$
ordered tuples of variables $(\varepsilon(j) X_j)_{j \in J}$ all have the
same joint distribution -- i.e., this is independent of $\varepsilon$.
\end{definition}

\begin{theorem}[L\'evy's inequality]\label{Llevyineq}
Fix an abelian metric group $(\G, +, 0, d)$, integers $m,n \in \Z_{>0}$,
and symmetric random variables $X_1, \dots, X_n \in L^0(\Omega,\G)$. Also
fix subsets $B_1, \dots, B_m \subseteq \{ 1, \dots, n \}$, such that for
all $1 \leqslant j \leqslant k \leqslant m$, $B_j \cap B_k$ is either
$B_j$ or empty. Set $X_B := \sum_{b \in B} X_b$ for all $B \subseteq \{
1, \dots, n \}$. If $S_n = X_1 + \cdots + X_n$, then for all $s,t>0$,
\begin{equation}
\bp{\max_{1 \leqslant k \leqslant m} d(0, 2 X_{B_k}) > s+t}
\leqslant \bp{d(0, S_n) > s} + \bp{d(0, S_n) > t}.
\end{equation}
\end{theorem}

Note that if $\G$ is a normed linear space and $s=t$, then the left-hand
side is concerned with the event that $\max_{1 \leqslant k \leqslant m}
\| 2 X_{B_k} \| > 2t$, which is how the inequality usually appears in the
literature.

It is the universal formulation and generalization of the result that is
of note here. Indeed, Theorem~\ref{Llevyineq} simultaneously strengthens
several different variants in the literature, which to our knowledge had
not previously been unified.
See \cite[Proposition 2.3]{LT} for two special cases where $\G$ is a
Banach space, $s=t$, $m=n$, and $B_k = \{ 1, \dots, k \}$ or $B_k = \{ k
\}$ for all $k$. Theorem~\ref{Llevyineq} also holds for other choices of
subsets
$B_k$, e.g.~$\{ 1 \}, \{ 1, 2 \}, \{ 3, 4, 5 \}, \{ 3, 4, 5, 6 \}$; or
$B_k = \{ n-k+1, \dots, n \}$ by reversing the order of summation; this
last choice is used below.
Moreover, Theorem~\ref{Llevyineq} extends L\'evy's inequality from Banach
spaces to all abelian metric groups. We provide a formal proof as it is
in a more general setting than what is available in the literature.

\begin{proof}[Proof of Theorem~\ref{Llevyineq}]
Define the stopping time
\begin{equation}
\tau := \min \{ 1 \leqslant k \leqslant m : d(0, 2 X_{B_k}) > s+t
\}.
\end{equation}
Now note that there is a smallest integer $1 \leqslant m_k \leqslant k$
such that $B_{m_k+1}, B_{m_k+2}, \dots, B_{k-1} \subseteq B_k$. By
assumption, $B_1, \dots, B_{m_k}$ are all disjoint from $B_k$.
Thus the event $\tau = k$, which denotes
\[
d(0, 2 X_{B_j}) \leqslant s+t < d(0, 2 X_{B_k}) \quad \forall
1 \leqslant j \leqslant k-1
\]

\noindent can be represented also as the event
\[
d(0, -2 X_{B_j}) \leqslant s+t \quad \forall 1 \leqslant j
\leqslant m_k, \qquad d(0, 2 X_{B_j}) \leqslant s+t < d(0, 2
X_{B_k}) \quad \forall m_k < j < k.
\]

\noindent Thus let ${\bf X}_r := X_{\cup_{j=1}^r B_j}$ for all $r$. Then
it follows from the symmetry assumption that
\begin{align*}
\bp{d(0, S_n) > t, \tau = k} = &\ \bp{d(0, (-{\bf X}_{m_k}) +
X_{B_k} - (S_n - {\bf X}_{m_k} - X_{B_k})) > t, \tau = k}\\
= &\ \bp{d(0, 2 X_{B_k} - S_n) > t, \tau = k}.
\end{align*}

We now prove the result. Note by the triangle inequality that
\[
d(0, 2 X_{B_k}) > s+t \quad \implies \quad d(0, S_n) > s
\quad \text{or} \quad d(0, 2 X_{B_k} - S_n) > t. \]

\noindent Thus by the above analysis, the result follows:
\begin{align*}
&\ \bp{\max_{1 \leqslant k \leqslant m} d(0, 2 X_{B_k}) > s+t} =
\sum_{k=1}^m \bp{\tau = k, \ d(0, 2 X_{B_k}) > s+t}\\
\leqslant &\ \sum_{k=1}^m \left( \bp{d(0, S_n) > s, \tau = k} +
\bp{d(0, 2 X_{B_k} - S_n) > t, \tau = k} \right)\\
= &\ \bp{d(0, S_n) > s,\ \tau \in [1,m]}
+ \bp{d(0, S_n) > t,\ \tau \in [1,m]}. \qedhere
\end{align*}
\end{proof}

We next show the Khinchin--Kahane inequality~\eqref{Ekhinchin}.

\begin{proof}[Proof of Theorem~\ref{Tkhinchin}]
For this proof, fix an abelian metric group $\G$, elements $x_1, \dots,
x_n$ $\in \G$, and Rademacher variables $r_1, \dots, r_n$. For ease of
exposition we break the proof into two steps.\medskip

\noindent \textbf{Step 1.}
We claim that \textit{for all abelian metric groups $\G$ and $\G$-valued
Rademacher sums $\sum_{k=1}^n r_k x_k$,}
\begin{align}\label{Elevy-appl}
\begin{aligned}
&\ \bp{d ( 0, 2 \sum_{k=1}^n r_k x_k ) > s+t+u+v}\\
& \leqslant (\bp{P_n > s} + \bp{P_n > t}) \cdot (\bp{P_n > u} + \bp{P_n >
v})
\end{aligned}
\end{align}

\noindent \textit{for all $s,t,u,v > 0$, where $P_n := d \left( 0,
\sum_{k=1}^n r_k x_k \right)$.}\medskip

Existing variants in the literature are usually special cases with $\G =
\bb$ a Banach space and $s=t=u=v$. While the proof uses familiar
arguments, we include it for the reader's convenience, as it is in
somewhat greater generality than can usually be found in the literature.

Define $S_k := \sum_{j=1}^k r_j x_j$ for $k \in [1,n]$.
Similar to the proof of L\'evy's inequality (Theorem~\ref{Llevyineq}),
define the stopping time
$\tau := \min \{ k \in [1,n] : d(0, 2 S_k) > s+t \}$.
Also recall that $(r_1, \dots, r_n)$ and $(r_1, \dots, r_k, r_k r_{k+1},
\dots, r_k r_n)$ are identically distributed. Therefore 
(using that $d(0,g) \equiv d(0,-g)$),
\begin{align*}
&\ \bp{d(2 S_{k-1}, 2 S_n) > u+v, \tau = k}
= \bp{d(0, 2 \sum_{j=k}^n r_j x_j) > u+v, \tau = k}\\
= &\ \bp{d(0, 2 \sum_{j=k}^n r_k r_j x_j) > u+v, \tau = k}\\
= &\ \bp{d(0, 2 x_k + 2 \sum_{j=k+1}^n r_k r_j x_j) > u+v, \tau = k}\\
= &\ \bp{d ( 0, 2 x_k + 2 \sum_{j=k+1}^n r_j x_j ) > u+v, \tau = k}
= \bp{d(2 x_k + 2 S_n, 2 S_k) > u+v, \tau = k}.
\end{align*}

\noindent The same argument without restricting to the event $\tau = k$
shows that:
\[
\bp{d(2 S_{k-1},2 S_n) > u+v} = \bp{d(2 x_k + 2 S_n, 2 S_k) > u+v}.
\]

\noindent Now note that if $d(0, 2 S_n(\omega)) > s+t+u+v$ and
$\tau(\omega) = k$, then $d(2 S_{k-1}(\omega), 2 S_n(\omega)) > u+v$.
Since $\tau = k$ and $d(2 S_k, 2 x_k + 2 S_n)$ are independent, we
compute:
\begin{align*}
&\ \bp{d(0, 2 S_n) > s+t+u+v, \tau = k} \leqslant
\bp{d(2 S_{k-1}, 2 S_n) > u+v, \tau = k}\\
= &\ \bp{d(2 x_k + 2 S_n, 2 S_k) > u+v} \bp{\tau = k}
= \bp{d(2 S_{k-1}, 2 S_n) > u+v} \bp{\tau = k}\\
\leqslant &\ \bp{\tau = k} \left( \bp{P_n > u} + \bp{P_n > v} \right),
\end{align*}

\noindent by using L\'evy's inequality (Theorem~\ref{Llevyineq}) with
$m=1, B_1 = \{ k, \dots, n \}, X_l = 2 r_l x_l\ \forall l \geqslant k$,
and replacing $(s,t)$ by $(u,v)$. Now another application of L\'evy's
inequality with the same choice of parameters -- except with $B_k = \{ 1,
\dots, k \}$ -- concludes the proof.
\comment{
{\color{red}
\begin{align*}
\bp{d(0, S_n^2) > s+t+u+v}
= &\ \sum_{k=1}^n \bp{d(0, S_n^2) > s+t+u+v, \tau = k}\\
\leqslant &\ \sum_{k=1}^n \bp{\tau = k} \left( \bp{P_n > u} + \bp{P_n >
v} \right)\\
= &\ \bp{\max_{1 \leqslant k \leqslant n} d(0, 2 S_k) > s+t} \left(
\bp{P_n > u} + \bp{P_n > v} \right)\\
\leqslant &\ \left( \bp{P_n > s} + \bp{P_n > t} \right) \cdot \left(
\bp{P_n > u} + \bp{P_n > v} \right).
\end{align*}
}}
\medskip

\noindent \textbf{Step 2.}
We now prove the inequality~\eqref{Ekhinchin} for $p,q \geqslant 1$.
Repeatedly applying the inequality~\eqref{Elevy-appl},
\begin{equation}
\bp{d(0, 2^l S_n) > 4^l r} \leqslant 4^{2^l-1} \bp{d(0, S_n)
> r}^{2^l}, \qquad \forall l \in \Z_{>0}.
\end{equation}

\noindent Set $l$ to be the unique positive integer such that $2^{l-1}
\leqslant q < 2^l$, and change variables $t = 4^l r \in (0,\infty)$.
Using that $\be_\mu[Z^q] = q \int_0^\infty t^{q - 1} \bp{Z>t}\ dt$ for an
$L^q$ random variable $Z \geqslant 0$, we compute:
\begin{align*}
\be_\mu[d(0, 2^l S_n)^q] = &\ q \int_0^\infty (4^l r)^{q-1}
\bp{d(0, 2^l S_n) > 4^l r} \cdot 4^l\ dr\\
\leqslant &\ q 4^{lq + 2^l-1} \int_0^\infty r^{q-1} \bp{d(0, S_n) >
r}^{2^l}\ dr.
\end{align*}

\noindent Now $4^{lq} \leqslant (2q)^{2q}$ and $r \bp{d(0,S_n) > r}
\leqslant \be_\mu[d(0,S_n)]$ by Markov's inequality. Therefore,
\begin{align*}
\be_\mu[d(0, 2^l S_n)^q] \leqslant &\ (2q)^{2q} 4^{2q-1} q
\int_0^\infty \be_\mu[d(0,S_n)]^{q-1} \cdot \bp{d(0,S_n) >
r}\ dr
= \frac{(8q)^{2q+1}}{32} \be_\mu[d(0, S_n)]^q.
\end{align*}

\noindent Taking $q$th roots and using H\"older's inequality now
yields~\eqref{Ekhinchin}.
\end{proof}

\section{The transfer principle for normed (abelian semi)groups and its
applications}\label{Snormed}

In this section we formulate and prove the transfer principle promised
above, which will allow one to take random variables and their
probability/functional analysis from a subclass of abelian metric
semigroups to Banach spaces. We begin by introducing the key notion
required for this.

\begin{definition}\label{Dnormed}
We say that a (possibly non-abelian) metric semigroup $(\G, \cdot, d)$ is
\textit{normed} if
\begin{equation}\label{Enorm}
d(g, g^{n+1}) = n d(g,g^2), \qquad \forall g \in \G, \ n \in
\Z_{\geqslant 0}.
\end{equation}
\end{definition}

Notice that if $\G$ is an abelian metric group, then~\eqref{Enorm}
implies the following stronger version:
\[
d(ng, nh) = |n| d(g,h), \qquad \forall g,h \in \G, \ n \in \Z.
\]

There is extensive literature on the analysis of topological semigroups
with translation-invariant metrics and related structures. See
\cite{BO,CSC,GK} and the references therein for more on the subject.
These references call any group with a metric (under which the inverse
map is an isometry) a ``normed'' group, while the above
condition~\eqref{Enorm} is termed \textit{$\Z_{>0}$-homogeneity}.
However, in Definition~\ref{Dnormed} we instead adopt the notation of
\cite{Step}, and define a norm to be more in the flavor of Banach spaces.
The objects in Definition~\ref{Dsemi} will be called metric (semi)groups
in this paper.

\subsection{The transfer principle and its applications to abelian normed
semigroups}\label{Sappl}

We now return to our extension~\eqref{Ekhinchin} of the traditional
Khinchin--Kahane inequality. Notice that if the abelian metric group $\G$
is moreover normed, then the left-hand side of~\eqref{Ekhinchin} equals a
scalar times the original left-hand side in~\eqref{Ekhinchin-Banach}, and
so it is natural to ask if Theorem~\ref{Tkhinchin} can be modified to
yield the same (improved) constants as in the Banach space
case~\eqref{Ekhinchin-Banach}.

It turns out that this is indeed possible, as we explain presently using
our next result, Theorem~\ref{Tnormed}, which says that $\G$ ``is'' a
subspace/additive subgroup of a Banach space -- and constructs a
candidate for the ``smallest'' such Banach space. We show this result for
all abelian normed \textit{semigroups} $\G$. Of course, the
Khinchin--Kahane inequality works with metric groups, so only the special
case of our next result, wherein $\G$ is a(n abelian) normed group, is
required to address our motivation in the preceding paragraph. This
special case can be found in the literature, see below.

\begin{theorem}[Transfer principle]\label{Tnormed}
Every (separable) abelian normed metric semigroup $\G$ canonically and
isometrically embeds into a ``smallest'' (separable) real Banach space
$\bb(\G)$. The same holds if $\G$ is a normed (but not \textit{a priori}
abelian) metric group. In particular, the theory of Bochner integration
and expectations extends to all such (semi)groups $\G$: if $X$ is
$\G$-valued then $\be_\mu[X] \in \bb(\G)$.
\end{theorem}

\begin{remark}
While the results in this work hold only for groups that are abelian, we
stress that the abelian hypothesis is \textit{not} required in the second
assertion in Theorem~\ref{Tnormed}. See Theorem~\ref{Tpolymath}.
\end{remark}

The proof of Theorem~\ref{Tnormed} is related to previous constructions
in the literature, and we defer it to later in this section. At present,
we discuss some of its consequences and applications.

\begin{example}
The first application is that our Khinchin--Kahane
inequality~\eqref{Ekhinchin} can be refined for \textit{normed} metric
groups $\G$ to yield precisely the (sharp) constants in the Banach space
setting:

\begin{prop}\label{Pkhinchin}
For all normed groups $\G$, one has the ``usual'' Khinchin--Kahane
inequality, with a universal constant $C_{p,q}$ (for fixed $p,q \geqslant
1$ but universal across all normed $\G$ and $n, x_k, r_k$):
\begin{equation}\label{Ekhinchin-normed}
\be_\mu \left[ d \left( 0, \sum_{k=1}^n r_k x_k \right)^q
\right]^{1/q} \leqslant C_{p,q} \cdot \be_\mu \left[ d \left( 0,
\sum_{k=1}^n r_k x_k \right)^p \right]^{1/p}.
\end{equation}
Moreover, for all $p,q$, the constant $C_{p,q}$ (universal over the
category of all abelian normed groups) is equal to the universal constant
when working only with the sub-category of all real Banach spaces.
\end{prop}

Recall that in the classic paper~\cite{LO}, Lata{\l}a--Oleszkiewicz
obtained the optimal such universal constant across all Banach spaces in
the regime $p=1 \leqslant q \leqslant 2$, namely, $C_{1,q} = 2^{1 -
1/q}$. Proposition~\ref{Pkhinchin} shows that $C_{1,q}$ also works for
the Khinchin--Kahane inequality in (abelian) normed metric groups. The
proof is immediate: consider all $\G$-valued random variables to now be
$\bb(\G)$-valued.
\end{example}

We now explain several other applications -- all of which involve abelian
normed \textit{semigroups} (noting that such results/applications are not
discussed even for normed groups in the literature):

\begin{example}
More broadly than Proposition~\ref{Pkhinchin}, Theorem~\ref{Tnormed}
provides a route to ``transfer'' problems from abelian normed metric
semigroups to Banach spaces. For instance, L\'evy's equivalences between
modes of stochastic convergence of sums of independent $\G$-valued random
variables immediately follow from their Banach space counterparts for
$\bb(\G)$, e.g.~\cite[Theorem 2.4]{LT}.
\end{example}

\begin{example}
A third -- and more challenging -- application of Theorem~\ref{Tnormed}
is to extend to normed $\G$ the main result of \cite{DM}, which provides
universal constants that occur in bounding vector-valued Rademacher
series. We now extend this theorem to arbitrary normed $\G$ (and the
$K^w_{1,2}$ in the statement of the next result will be explained
following Corollary \ref{Cdm}). Note that such an extension result is not
immediate as one has to first understand better the notion of ``linear
functionals'' on $\G$. This is carried out below; in what follows, $\| g
\|$ denotes $d(0,g)$.

\begin{theorem}\label{Tdm}
Fix an i.i.d.\ sequence of Rademacher variables $\varepsilon_n \sim {\rm
Unif} \{ -1,1 \}$. Then there exists an absolute constant $c>0$ such that
for all choices of 
(a)~separable abelian normed metric semigroups $\G$,
(b)~points $g_n \in \G$ such that the Rademacher series $X := \sum_n
\varepsilon_n g_n$ is almost surely convergent (e.g.\ in $\bb(\G)$), and
(c)~scalars $t>0$, we have:
\begin{align}
\bp{\| X \| > 2 \be \| X \| + 6 K^w_{1,2}((g_n), t)} \leqslant &\ 4
e^{-t^2/8},\\
\bp{\| X \| > \frac{1}{2} \be \| X \| + c K^w_{1,2}((g_n), t)} \geqslant
&\ c e^{-t^2/c}.
\end{align}
\end{theorem}

Observe that the results of Talagrand \cite{Ta} that are cited in
\cite{DM} also extend to abelian normed semigroups, as does the
observation that opens the proof of the main theorem in \cite{DM}:

\begin{prop}
All separable abelian normed semigroups are
``isometric sub-semigroups'' of $\ell_\infty$.
\end{prop}

Furthermore, the various applications of (the version of)
Theorem~\ref{Tdm} in \cite{DM} also hold in abelian normed semigroups.
These include the following ``semigroup-valued'' precise form of the
Khinchin--Kahane inequality, in a sense bringing us back full circle to
Theorem~\ref{Tkhinchin}.

\begin{cor}[{see \cite[Corollary~3]{DM}}]\label{Cdm}
As above, let $X := \sum_n \varepsilon_n g_n$ be an almost surely
convergent Rademacher series with all $g_n$ in a separable abelian normed
metric semigroup $\G$. Then there is an absolute constant $c > 0$ such
that
\[
\frac{1}{c} \be[ \| X \|^p ]^{1/p} \leqslant
\be \| X \| + K^w_{1,2}((g_n), \sqrt{p}) \leqslant
c \be[ \| X \|^p ]^{1/p}, \quad \forall p \in [1,\infty).
\]
\end{cor}
\end{example}

Let us explain the preceding theorem (and hence its corollary), and in
particular, why these results are not direct applications of the transfer
principle in Theorem~\ref{Tnormed}.
In Theorem~\ref{Tdm} and Corollary~\ref{Cdm}, the constant
$K^w_{1,2}((g_n),t)$ was used for scalars $t>0$. In the original setting
of \cite{DM}, defining this constant involves Banach space analysis and
weakly $\ell_p$ sequences. We now extend this definition to all abelian
normed semigroups $\G$. For $p \in [1,\infty)$, we say a sequence of
points $(g_n)_n$ in $\G$ is \textit{weakly} $\ell_p$ if $(g^*(g_n))_n$ is
$\ell_p$ for every $g^* \in \G^*$, where $\G^*$ denotes the set of
additive Lipschitz real-valued maps on $\G$. Note, this differs from the
Banach space definition, which would require running over all functionals
in $\bb(\G)^*$ (or the dual space to a larger Banach space), via
Theorem~\ref{Tnormed}.

Now define for a weakly $\ell_2$ sequence $(g_n)$ and a scalar sequence
$(a_n) \in \ell_2$:
\begin{align*}
K_{1,2}((a_n),t) := &\ \inf \{ \| (a_{1,n}) \|_1 + t \| (a_{2,n}) \|_2 :
a_n = a_{1,n} + a_{2,n}\ \forall n, \ (a_{j,n})_n \in \ell_j \text{ for }
j = 1,2 \},\\
K^w_{1,2}((g_n),t) := &\ \sup \{ K_{1,2}((g^*(g_n))_n, t) : g^* \in \G^*,
\| g^* \| \leqslant 1 \}.
\end{align*}

\noindent Then the key is that the $K^w_{1,2}$-value computed using
$\G^*$ exactly matches the Banach space version that uses $\bb(\G)^*$
(hence the results of \cite{DM} extend to abelian normed semigroups), by
our next result:

\begin{prop}\label{Psame-dual}
For $\G$ an abelian normed semigroup, let $\G^*$ denote the set of
additive Lipschitz real-valued maps on $\G$. Then $\G^*$ is a Banach
space, which coincides with the dual space construction if $\G$ is a
Banach space. More generally if $\G$ is an abelian normed semigroup, then
$\G^* \simeq \bb(\G)^*$.
\end{prop}

As this paper focuses on probability inequalities and analysis-related
constructions, we defer the proof of Proposition~\ref{Psame-dual} to a
standalone appendix on a \textit{category-theoretic} treatment of normed
\textit{modules} and their properties, for the interested reader; see
Proposition~\ref{Pdual}. In particular, as noted in \cite{DM}, the
assignment $t \mapsto K^w_{1,2}((g_n),t)$ is Lipschitz with Lipschitz
constant at most
\[
\ell^w_2((g_n)) := \sup_{\| g^* \| \leqslant 1} \| (g^*(g_n)) \|_2
\]

\noindent (where $g^*$ runs over $\G^*$), and Theorem~\ref{Tdm} holds
over all abelian normed metric semigroups.

\begin{example}
As additional consequences of our ``transfer principle'' in
Theorem~\ref{Tnormed}, the main results in \cite{Mo,MP} immediately
extend to arbitrary abelian normed semigroups.
\end{example}

\subsection{Banach space embeddings}

We next return to (the proof of) Theorem~\ref{Tnormed}, and discuss it
vis-a-vis the question of embedding a given topological group into a
Banach space. The theorem says that for a metric (semi)group $(\G, \cdot,
d)$, the assumption of being (abelian and) normed is sufficient to embed
$\G$ into a Banach space. Clearly, the assumption is also necessary. The
next result provides additional equivalent conditions when $\G$ is a
group, and also relates it to results in the literature.

\begin{definition}
Given $J \subseteq \Z_{>0}$, a (possibly non-abelian) metric semigroup
$(\G, d)$ is {\em $J$-normed} if
\begin{equation}
d(z_0, z_0^{n+1}) = n d(z_0, z_0^2),
\qquad \forall z_0 \in \G, n \in J.
\end{equation}
\end{definition}

\begin{prop}\label{Pnormed}
Suppose $\G$ is a topological group, with a continuous map $\| \cdot \| :
\G \to [0,\infty)$ satisfying:
(a)~$\| g \| = 0$ if and only if $g = 1$;
(b)~$\| g^{-1} \| = \| g \|$ for all $g \in \G$; and
(c)~the triangle inequality holds: $\| gh \| \leqslant \| g \| + \| h \|$
for $g,h \in \G$.
Then the following are equivalent:
\begin{enumerate}
\item There exists a Banach space $\bb$ and a group map $: \G \to (\bb,
+)$ that is an isometric embedding.

\item $\G$ is abelian and $d(g,h) := \| g^{-1} h \|$ is a
translation-invariant metric for which $\G$ is normed.

\item $\G$ is $\{ 2 \}$-normed and is {\em weakly commutative}, i.e., for
all $g,h \in \G$ there exists $n = n(g,h) \in \Z_{>0}$ such that
$(gh)^{2^n} = g^{2^n} h^{2^n}$.

\item $\G$ is $\{ 2 \}$-normed and amenable.
\end{enumerate}
\end{prop}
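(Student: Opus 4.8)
The plan is to make the Banach embedding (1) the hub of the equivalence: I would prove $(1)\Leftrightarrow(2)$ directly from the transfer principle (Theorem~\ref{Tnormed}), and reduce $(3)$ and $(4)$ to it. Throughout, write $d_\G(g,h):=\|gh^{-1}\|$; under axioms (a)--(c) this is always a \emph{right}-invariant metric, so the first task is to upgrade it to a genuinely translation-invariant, $\N$-homogeneous structure whenever $\G$ is $\{2\}$-normed.

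First I would record a preparatory lemma valid under $\{2\}$-normedness alone, namely that the norm is conjugation-invariant. Iterating $\{2\}$-normedness gives $\|(aga^{-1})^{2^k}\|=2^k\|aga^{-1}\|$, while $(aga^{-1})^{2^k}=a\,g^{2^k}a^{-1}$ and the triangle inequality give $2^k\|aga^{-1}\|\leqslant 2\|a\|+2^k\|g\|$; dividing by $2^k$ and letting $k\to\infty$ yields $\|aga^{-1}\|\leqslant\|g\|$, and the substitution $g\mapsto aga^{-1},\ a\mapsto a^{-1}$ gives equality. Consequently $d_\G$ is two-sided translation-invariant, so $(\G,d_\G)$ is a metric group in the sense of Definition~\ref{Dsemi}, and by Lemma~\ref{Lnormed} being $\{2\}$-normed is equivalent to being $\N$-normed, i.e.\ $\|g^n\|=n\|g\|$ for all $n$. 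Thus under any of $(2)$--$(4)$ the \emph{only} ingredient missing for $(2)$ is commutativity.

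Next I would dispatch $(1)\Leftrightarrow(2)$. For $(2)\Rightarrow(1)$, apply Theorem~\ref{Tnormed} to the abelian normed metric group $(\G,d_\G)$: its proof embeds $\G$ isometrically into $\bb(\G)$ along a chain $\G\hookrightarrow\G'\hookrightarrow\G_\Z\hookrightarrow\G_\Q\hookrightarrow\bb(\G)$ of group homomorphisms, so the composite is exactly an isometric homomorphic embedding. For $(1)\Rightarrow(2)$, let $\phi:\G\to\bb$ be such an embedding; the isometry forces $\|\phi(g)\|_\bb=d_\G(g,1_\G)=\|g\|$, injectivity of $\phi$ together with abelianness of $\bb$ forces $\G$ abelian, the relation $\|g^n\|=\|n\,\phi(g)\|_\bb=n\|g\|$ gives normedness, and translation-invariance descends from $\bb$. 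The implications $(2)\Rightarrow(3)$ and $(2)\Rightarrow(4)$ are then immediate, since an abelian group is weakly commutative (indeed $(gh)^k=g^kh^k$) and amenable, and normed groups are $\{2\}$-normed.

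The crux, and the expected main obstacle, is the reverse passage from the weakened hypotheses to commutativity, i.e.\ $(3)\Rightarrow(1)$ and $(4)\Rightarrow(1)$. By the preparatory lemma both $(3)$ and $(4)$ already supply a bi-invariant $\N$-homogeneous norm, so the remaining content is that weak commutativity (resp.\ amenability) forces this norm to arise from a Banach embedding -- equivalently, forces all commutators to vanish. Here I would invoke and adapt the group-case analysis of \cite{GK}: the homogeneity $\|g^{2^n}\|=2^n\|g\|$ turns the norm into a homogeneous object on which commutators can be controlled, and weak commutativity is precisely what lets one compare $(gh)^{2^n}$ with $g^{2^n}h^{2^n}$ to trivialize the commutator subgroup. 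The amenable case I expect to be the genuinely hard one: naive power manipulations only yield consistency rather than vanishing, so one instead needs an averaging argument -- via an invariant mean, equivalently the absence of nontrivial homogeneous quasimorphisms on amenable groups -- to produce commutativity, and I would route it as $(4)\Rightarrow(1)$ following \cite{GK}. Assembling $(1)\Rightarrow(2)\Rightarrow(3)\Rightarrow(1)$ together with $(2)\Rightarrow(4)\Rightarrow(1)$ then closes all four equivalences.
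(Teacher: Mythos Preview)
Your proposal is correct and follows essentially the same architecture as the paper: the easy implications $(1)\Rightarrow(2)\Rightarrow(3)$ and $(1)\Rightarrow(4)$ are handled directly, while the substantive implications $(3)\Rightarrow(1)$ and $(4)\Rightarrow(1)$ are delegated to the literature (the paper cites \cite[Proposition~4.12]{CSC} via \cite[Corollary~1]{GK}, whereas you cite only \cite{GK}; the former reference is where the amenable case actually lives).

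Two small differences are worth noting. First, your preparatory lemma---that $\{2\}$-normedness alone forces conjugation-invariance of $\|\cdot\|$, hence bi-invariance of $d_\G$---is a clean self-contained observation that the paper does not isolate; it is correct and lets you invoke Lemma~\ref{Lnormed} legitimately under hypotheses $(3)$ or $(4)$ before commutativity is known. Second, you prove $(2)\Rightarrow(1)$ directly via Theorem~\ref{Tnormed}, whereas the paper routes it as $(2)\Rightarrow(3)\Rightarrow(1)$ through the external references; your route is more in keeping with the paper's own emphasis on the transfer principle. Neither difference changes the logical content: in both treatments the genuine work for $(3),(4)\Rightarrow(1)$ is outsourced, and your sketch of ``trivializing the commutator subgroup'' under weak commutativity, and of an invariant-mean argument under amenability, accurately reflects what those references do.
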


In fact there is a fifth (\textit{a priori} weaker than~(2) or~(4), yet)
equivalent condition -- that $\G$ is $\{ 2 \}$-normed \textit{without}
additional restrictions -- which we explain in Theorem~\ref{Tpolymath}.

\begin{proof}
That $(1) \implies (2) \implies (3)$ is immediate. That~$(3)$ or~$(4)$
implies $(1)$ follows from \cite[Proposition~4.12]{CSC} via
\cite[Corollary~1]{GK}. This is a constructive proof, and the formula for
the Banach space in question is discussed later in this subsection.
Finally, that $(1) \implies (4)$ follows since every abelian group is
amenable (see \cite{Da,Pat,Wag} for more on amenable groups).
\end{proof}

In this connection, the following result shows (as a special case) that
even without requiring the semigroup to be abelian, the ``normed''
property of a translation-invariant metric on a semigroup:
\[
d(z_0, z_0^{n+1}) = n\, d(z_0, z_0^2), \qquad \forall z_0 \in \G, \
n \in \Z_{\geqslant 0}
\]
already follows from -- hence is equivalent to -- the ``doubling''
property of being $\{ 2 \}$-normed:
$d(z_0, z_0^3) = 2 d(z_0, z_0^2)$ for all $z_0 \in \G$.
We omit the proof as it is a variant of \cite[Lemma~1]{GK}.

\begin{lemma}\label{Lnormed}
Given a nonempty subset $J \subseteq \Z_{>0}$, $J \neq \{ 1 \}$, a metric
semigroup $\G$ is $J$-normed if and only if $\G$ is $\Z_{>0}$-normed.
\end{lemma}

We next constructively prove the ``transfer principle'' above.

\begin{proof}[Proof of Theorem~\ref{Tnormed}]
The first point is that every normed metric group is abelian by
Theorem~\ref{Tpolymath} below (see \cite{Po}), and so the second
assertion reduces to the first. To show the first assertion, we will use
additive notation throughout this proof as $\G$ is abelian. The proof is
constructive, and carried out in stages; however, an outline is in the
following equation:
\begin{equation}\label{Enormed}
\G_\N := \G \hookrightarrow \G_{\N \cup \{ 0 \}} := \G' \hookrightarrow
\G_\Z := \Z \otimes_{\N \cup \{ 0 \}} \G_{\N \cup \{ 0 \}}
\hookrightarrow \G_\Q := \Q \otimes_\Z \G_\Z \hookrightarrow \bb(\G) :=
\overline{\G_\Q},
\end{equation}

\noindent where $\N := \Z_{>0}$. We now explain these steps one by one.
\begin{enumerate}
\item Embed the semigroup into a metric monoid $\G'$ via
Corollary~\ref{Cstrict}. We label $\G_\N := \G$ and $\G_{\N \cup \{ 0 \}}
:= \G'$ to denote that $\G,\G'$ are ``modules'' over $\N, \N \cup \{ 0
\}$ respectively.

\item It is easily shown that $\G_\N$ and hence $\G_{\N \cup \{ 0 \}}$ is
cancellative. 
Therefore the monoid $\G_{\N \cup \{ 0 \}}$ embeds into its Grothendieck
group\footnote{The \textit{Grothendieck group} of an abelian monoid $M$
is the abelian group $Gr_M$ of equivalence classes in $M \times M$ under:
$(a,a') \sim (b,b')$ if there exists $m \in M$ such that $a+b'+m =
a'+b+m$. The element/class $[(m,m')]$ should be thought of as $m-m'$, the
operation is coordinatewise addition, the zero is $[(m,m)]$ for any $m
\in M$, and the inverse of $[(m,m')]$ is $[(m',m)]$. If $M$ has the
cancellation property then $M \hookrightarrow Gr_M$.}
$\G_\Z$ (which is a $\Z$-module) by attaching additive inverses and
quotienting by an equivalence relation. Extend the metric $d_{\G_{\N \cup
\{ 0 \}} }$ to $\G_\Z$ via: $d_{\G_\Z}(p-q, r-s) := d_{\G_{\N \cup \{ 0
\}} }(p+s, q+r)$, for all $p,q,r,s \in \G_{\N \cup \{ 0 \}}$.
Then $(\G_\Z, 0_{\G_{\N \cup \{ 0 \}} }, d_{\G_\Z})$ is an abelian metric
group and $\G_\N \hookrightarrow \G_{\N \cup \{ 0 \}} \hookrightarrow
\G_\Z$ are isometric (hence injective) semigroup/monoid homomorphisms.
$\G_\Z$ is also normed since for all $n \in \Z$ and all $p,q \in \G_{\N
\cup \{ 0 \}}$,
\[
d_{\G_\Z}(0_{\G_{\N \cup \{ 0 \}} }, n (p-q)) = d_{\G_{\N \cup \{ 0
\}} }(|n|q, |n|p) = |n| d_{\G_{\N \cup \{ 0 \}} }(q,p) = |n|
d_{\G_\Z}(0_{\G_{\N \cup \{ 0 \}} }, p-q).
\]

\item Note that $\G_\Z$ is a torsion-free $\Z$-module -- i.e., if $ng =
0_{\G_\Z}$ for $n \in \Z \setminus \{ 0 \}$ and $g \in \G_\Z$, then $g =
0_{\G_\Z}$. Now define $\G_\Q := \Q \otimes_\Z \G_\Z$; thus $\G_\Q$ is a
$\Q$-vector space (i.e., a torsion-free divisible\footnote{These terms --
as also tensor products -- are defined and studied in standard algebra
textbooks, e.g.\ \cite{Lang}.} group), and $\G_\Z$ embeds into $\G_\Q$.
Moreover for every $g \in \G_\Q$ there exists $n_g \in \N$ such that $n_g
g \in \G_\Z$. Define $d_{\G_\Q}$ on $\G_\Q^2$ via:
\[
d_{\G_\Q}(g,h) := \frac{1}{n_g n_h} d_{\G_\Z}(n_h (n_g g),
n_g (n_h h)).
\]

\noindent It is not hard to check that $d_{\G_\Q}$ is well-defined and
induces a ``$\Q$-norm'' on $\G_\Q$ that extends $d_{\G_\Z}$ on $\G_\Z$.
In particular, it induces a translation-invariant metric on $\G_\Q$, so
that we have embedded the normed semigroup $\G_\N$ isometrically into a
normed $\Q$-vector space.

\item Define $\bb(\G)$ to be the set of equivalence classes of
$d_{\G_\Q}$-Cauchy sequences (i.e., the topological completion) of
$\G_\Q$. One shows using algebraic and topological arguments that
$\bb(\G)$ is an abelian group and $\G_\Q$ embeds isometrically into
$\bb(\G)$. Moreover, if $x \in \R$ and $(g_n)_n$ is Cauchy in $\bb(\G)$,
then choose any sequence $x_n \in \Q$ converging to $x$, and define $x
\cdot [(g_n)_n] := [(x_n g_n)_n]$. It is easy to verify that $(x_n
g_n)_n$ is also a Cauchy sequence in $\G_\Q$, and the resulting operation
makes $\bb(\G)$ into an $\R$-vector space.

Now define $d_{\bb(\G)}([(g_n)_n], [(h_n)_n]) := \lim_{n \to \infty}
d_{\G_\Q}(g_n,h_n)$ (this exists and is well-defined by applying
topological arguments). It is easily verified that $d_{\bb(\G)}$ induces
a norm on $\bb(\G)$, making $\bb(\G)$ a complete normed linear space, and
proving~\eqref{Enormed}.
\end{enumerate}

To conclude the proof, observe that if any of the steps starts with a
separable metric space, then the subsequent constructions also yield
separable metric spaces. 
The final assertion about extending Bochner integration to $\G$ now
follows; note the Bochner integral (or expectation) of $\G$-valued random
variables lives in $\bb(\G)$ and not necessarily in $\G$.
\end{proof}

\begin{remark}
If one restricts Theorem~\ref{Tnormed} to groups instead of semigroups,
Proposition~\ref{Pnormed} from~\cite{CSC,GK} is related, as it embeds an
abelian normed group into \textit{some} Banach space, via the ``double
dual''. To our knowledge, an explicit construction of a \textit{minimal}
Banach space ``envelope'' was not recorded to date in the literature. We
briefly discuss this and other aspects of our proof:
\begin{enumerate}
\item There was no mention of separability in the proof in~\cite{CSC}.
This is useful for applications (see Section~\ref{Sappl}) and hence is
addressed by our proof.

\item The construction in~\cite[Proposition~4.12]{CSC} is that of the
``double-dual''
$\bb := {\rm Hom}_{gp,bdd}(\G, \R)^*$,
i.e., the dual space to the set of bounded/Lipschitz real-valued group
maps $: \G \to \R$. Thus, if $\G$ is an infinite-dimensional Banach
space, then the double-dual construction $\G^{**}$ is strictly larger
than $\G$ -- and hence, does not yield the ``minimal'' Banach space
envelope of $\G$ for ``most'' real Banach spaces -- whereas the above
proof does. One of the referees mentioned to us that the construction can
be refined to yield the minimal Banach space; however, to our knowledge
this refinement is not written down. This was one reason to write the
above argument in full detail -- especially given that our constructive
proof is along \textit{different} lines.

\item To the best of our knowledge, we could not find references to
embedding abelian normed \textit{semigroups}. For this a little more work
is needed to embed into an abelian normed group (via the monoid-extension
and then its Grothendieck group, as above). This semigroup-extension also
features in several applications (in Section~\ref{Sappl}), hence its
proof above.
\end{enumerate}
\end{remark}

We end with two further remarks. First, each step in~\eqref{Enormed} is
canonical (and optimal), in the sense that it uses only the given
information without any additional structure (and it constructs the
``minimal'' larger structure containing the structure at each step).
The natural way to encode this information is via category theory. In
other words, every further step/extension in~\eqref{Enormed} is the
smallest possible -- hence \textit{universal} -- ``enveloping'' object in
some category. For the interested reader, we defer these categorical
discussions to Appendix~\ref{Acat}.

Second, given Corollary~\ref{Cstrict}, it is natural to ask in the
non-abelian situation if every (cancellative) metric semigroup embeds
into a metric group. This question is harder to tackle; see \cite[Chapter
1]{CP} for a sufficient condition involving right reversibility.

\subsection{Non-abelian normed groups}\label{Snonabelian}

We end this section with a geometric question that clarifies the second
assertion in Theorem~\ref{Tnormed}:
\textit{Do non-commutative normed metric groups exist?}
Or: find a non-abelian topological group $\G$ with a bi-invariant metric
$d$, such that $d(1, g^n) = |n| d(1,g)$ for all $g \in \G$ and $n \in
\Z$. To our knowledge (and that of experts including \cite{GaKe,St2} and
others), the answer to this question was not known until recent work
\cite{Po}, whose main result we now describe.

As a possible approach to answering the aforementioned question, a first
step is to ask if certain prototypical examples of non-commutative groups
with a bi-invariant metric are normed. This is now shown to be false for
a well-studied example: the \textit{free group} $F_2$ on two generators.
Recall, this is simply the set of words in $a, b, a^{-1}, b^{-1}$, modulo
the relations $a a^{-1} = a^{-1} a = b b^{-1} = b^{-1} b = 1$.

\begin{lemma}\label{Lword}
Let $\G = F_2$ and let $d_\G$ denote the bi-invariant word metric in the
generators $a^{\pm 1}, b^{\pm 1}$ and their conjugates. Then $(\G, \cdot,
1, d_\G)$ is not normed.
\end{lemma}

Note that we work with $d_\G$ and not the usual word metric in the
four semigroup generators $a^{\pm 1}, b^{\pm 1}$ of $\G$. For more on the
metric $d_\G$ and related structures, see~\cite{BGKM} and the references
therein.

\begin{proof}
First compute: $[a,b]^3 = a b a^{-1} \cdot b^{-1} a b \cdot a^{-1} b^{-1}
a \cdot b a^{-1} b^{-1}$.
Examining the word lengths, the right-hand side yields at most $4$, while
$l_\G([a,b]) \neq 1$. Hence $(\G, \cdot, 1, d_\G)$ is not normed, as
\[
l_\G([a,b]^3) \leqslant 4 < 6 \leqslant 3 l_\G([a,b]). \qedhere
\]
\end{proof}

We conclude with a solution to the above question, obtained by the first
author in recent joint work \cite{Po} with T.~Fritz, S.~Gadgil,
P.~Nielsen, L.~Silberman, and T.~Tao. It turns out that non-commutative
normed metric groups do not exist! Namely:

\begin{theorem}[\cite{Po}]\label{Tpolymath}
Given a group $\G$, the following are equivalent:
\begin{enumerate}
\item $\G$ is a metric group (with a bi-invariant metric) that is $\{ 2
\}$-normed (equivalently, normed).

\item $\G$ is abelian and torsion-free.

\item $\G$ is an additive subgroup of (i.e., embeds isometrically and
additively into) a Banach space.
\end{enumerate}
\end{theorem}

This yields a novel characterization -- from analysis -- of a fundamental
class of algebraic objects: abelian torsion-free groups. 
Now given Theorem~\ref{Tpolymath}, many of the above results (e.g.\ our
transfer principle in Theorem~\ref{Tnormed}) are stated for normed metric
groups $\G$, as a norm on $\G$ implies $\G$ is abelian. Note however that
this last can fail if ``group'' is replaced by ``semigroup'' or even
``monoid'', since non-abelian (free) monoids with norms -- i.e.,
homogeneous length functions -- indeed exist; see \cite{Po} for details.
Another consequence is that the four assertions in
Proposition~\ref{Pnormed} are further equivalent to an \textit{a priori}
weaker assertion than~(2) or~(4): namely, that $\G$ is merely $\{ 2
\}$-normed.

\appendix

\section{Categories of normed metric modules}\label{Acat}

We now construct ``dual spaces'' to abelian normed metric groups, in
order to prove Proposition~\ref{Psame-dual} (see the discussion after
Theorem~\ref{Tdm}). This construction is similar -- with minor
adjustments -- for abelian normed metric semigroups and their
refinements:
(i)~semigroups,
(ii)~monoids,
(iii)~groups,
(iv)~torsion-free divisible groups,
(v)~real vector spaces, and
(vi)~Banach spaces.
To study all of these constructions systematically, we use the
language of category theory. (For basics of categories and functors, see
\cite{Lang}.) We will show in Proposition \ref{Pdual} below that ``dual
space constructions'' are covariant endofunctors -- more generally,
so are spaces of linear Lipschitz operators.

Using categories has additional advantages.
Recall that the proof of Theorem~\ref{Tnormed} showed that every abelian
normed semigroup (respectively, group) embeds into a smallest abelian
normed group (respectively, Banach space). We now make these statements
precise using category theory. Briefly, we will show in a unified way
that the above constructions are instances of ``universal objects'', and
provide examples of pairs of adjoint ``induction-restriction functors''.

To proceed, we first propose a unifying framework in which to
simultaneously study abelian normed metric semigroups of types (i)--(vi)
above: normed metric modules. In the sequel, $\N = \Z_{>0}$.

\begin{definition}
Suppose a subset $S \subseteq \R$ is closed under addition and
multiplication.
\begin{enumerate}
\item An \textit{$S$-module} is defined to be an abelian semigroup
$(G,+)$ together with an action map $\cdot : S \times G \to G$,
satisfying the following properties for $s,s' \in S$ and $g,g' \in
G$:\footnote{Note that if $0 \in S$ then $G$ is necessarily a monoid.}
\[
s \cdot (g+g') = s \cdot g + s \cdot g', \quad (s+s') \cdot g = (s \cdot
g) + (s' \cdot g), \quad (s s') \cdot g = s \cdot (s' \cdot g), \quad 1
\cdot g = g \text{ if } 1 \in S.
\]

\item A \textit{metric $S$-module} is an $S$-module $(G,+)$ together with
a translation-invariant metric $d$. We say $(G,+,d)$ is \textit{normed}
if $d(s \cdot g, s \cdot g') = |s| d(g,g')$ for all $s \in S$ and $g,g'
\in G$.

\item Let $\scrc_S$ denote the category whose objects are normed metric
$S$-modules $G_S$, and morphisms are $S$-module maps that are moreover
Lipschitz. For each such morphism $\varphi : G_S \to G'_S$, define $\|
\varphi \|$ to be the smallest constant $K \geqslant 0$ such that $\|
\varphi(g) \| \leqslant K \| g \|$ for all $g \in G_S$.
Also denote by $\overline{\scrc}_S$ the full sub-category of all objects
in $\scrc_S$ that are complete metric spaces.
\end{enumerate}
\end{definition}

Now $\N$-modules are semigroups and $(\N \cup \{ 0 \})$-modules are
monoids. Using this notation, Theorem~\ref{Tnormed} discusses the objects
in the categories $\scrc_S$ for $S = \N, \N \cup \{ 0 \}, \Z, \Q$, as
well as $\overline{\scrc}_\R$, the category of Banach spaces and bounded
operators. Note that normed linear spaces (i.e., $\scrc_\R$) are missing
from Theorem~\ref{Tnormed}; however, our next result also produces a
similar ``universal'' normed linear space containing a(n abelian) normed
group. Thus, the constructions in~\eqref{Enormed} possess functorial
properties and therefore are universal in the above categories:

\begin{theorem}\label{Tuniversal}
Suppose each of $S,T,U$ is either $\N, \N \cup \{ 0 \}$, or a unital
subring of $\R$, with $S \subseteq T$ or $S \supseteq T$. Suppose also
that $G_S$ is an object of $\scrc_S$. Now define
\begin{equation}
\G_T(G_S) := \begin{cases}
    G_S \ (\text{viewed as an object of } \scrc_T), & \text{if } S
    \supseteq T;\\
    \text{the unique object of $\scrc_T$ defined as in~\eqref{Enormed}},
    & \text{if } S = \N, \N \cup \{ 0 \},
    \ T \supseteq S;\\
    T \otimes_S G_S, & \text{if } \Z \subseteq S \subseteq T.
\end{cases}
\end{equation}
\begin{enumerate}
\item $\G_T(G_S)$ is an object of $\scrc_S \cap \scrc_T$ with the
following universal property: given an object $G_T$ in $\scrc_S \cap
\scrc_T$, together with a morphism $\iota : G_S \to G_T$ in $\scrc_S$,
$\iota$ extends via the unique isometric monomorphism $G_S
\hookrightarrow \G_T(G_S)$ to a unique morphism $\iota_T : \G_T(G_S) \to
G_T$ in $\scrc_T$.

\item In particular, $(\G_T(G_S), \iota_T)$ is unique up to a unique
isomorphism in $\scrc_T$.

\item Given $G_S$, define $\overline{\G}_T(G_S)$ to be the Cauchy
completion of $\G_T(G_S)$ (as a metric space). Then
$\overline{\G}_T(G_S)$ is an object of $\overline{\scrc}_T$ and satisfies
the same properties as in the previous parts.

\item Suppose $\N \subseteq S \subseteq T \subseteq U \subseteq \R$, with
$S,T,U$ of the form $\N, \N \cup \{ 0 \}$, or a unital subring of $\R$.
For all objects $G_S$ in $\G_S$, there exist unique isomorphisms:
\[
\G_U(\G_T(G_S)) \cong \G_U(G_S), \qquad \overline{\G}_U(\G_T(G_S)) \cong
\overline{\G}_U(\overline{\G}_T(G_S)) \cong \overline{\G}_U(G_S).
\]

\item Given a unital subring $S \subseteq \R$,
$S$ is dense in $\R$ if and only if
$\overline{G_S} = \overline{\G}_T(G_S) = \bb(G_S)$ for all objects
$G_S$ of $\scrc_S$ and all subrings $S \subseteq T \subseteq \R$.
(Here, $\bb(G_S)$ is as in Theorem~\ref{Tnormed}.)
\end{enumerate}
\end{theorem}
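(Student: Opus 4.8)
The plan is to derive everything from the universal property in part (1), establishing it case-by-case along the chain in Equation~\eqref{Enormed}, and then to obtain parts (2)--(5) either formally or by a single well-chosen test object. Throughout I would separate the \emph{algebraic} content, which is classical, from the \emph{metric} content, which carries the real work. For part (1) there are three cases. When $S \supset T$ the map $G_S \hookrightarrow \G_T(G_S)$ is the identity and any morphism $\iota$ in $\scrc_S$ is already $T$-linear (as $T \subset S$), so $\iota_T := \iota$ works and is forced. When $S \in \{\N, \N \cup \{0\}\}$ with $T \supset S$, or $\Z \subset S \subset T$, I would invoke the classical universal properties underlying \eqref{Enormed}: adjoining an identity (Corollary~\ref{Cstrict}), the Grothendieck group of a cancellative monoid, and extension of scalars $T \otimes_S -$. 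Each produces the unique \emph{algebraic} extension $\iota_T$ of a given $\iota$, and the explicit metric formulas from the proof of Theorem~\ref{Tnormed} (the Grothendieck metric $d_{\G_\Z}(p-q,r-s) = d(p+s,q+r)$, and the rational/ring metric obtained by clearing denominators) show simultaneously that $G_S \hookrightarrow \G_T(G_S)$ is isometric and that $\G_T(G_S)$ is normed over $T$, hence over $S$, so it lies in $\scrc_S \cap \scrc_T$.

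The metric heart of part (1), and the step I expect to be the main obstacle, is verifying that the algebraically-defined $\iota_T$ is actually a morphism of $\scrc_T$, i.e.\ uniformly continuous. For the Grothendieck step this is painless: the modulus of continuity transfers verbatim, since $d_{G_T}(\iota_T(p-q),\iota_T(r-s)) = d_{G_T}(\iota(p+s),\iota(q+r))$ is controlled by the modulus of $\iota$ applied to $d_{\G_\Z}(p-q,r-s) = d(p+s,q+r)$. The delicate point is the scalar-extension step (tensoring up to $\Q$, or to a subring $T$): there a typical element of $\G_T(G_S)$ is a ``fraction'' $g/N$ of an element of $G_S$, and the norm rescales by $1/N$, so a fixed modulus of $\iota$ on $G_S$ need not directly bound $\iota_T$ near the identity. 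The tool that resolves this is precisely the \emph{normed} condition $d(s\cdot g, s\cdot g') = |s|\,d(g,g')$, which holds in both the extended source and the target $G_T$: using it one renormalizes to the unit ball, converting control of $\iota_T$ into a \emph{scale-invariant} Lipschitz-type bound that is inherited from $\iota$. I would emphasize that homogeneity of the norm is exactly what makes the extension continuous, and treat this reduction as the crux of the argument.

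Parts (2)--(4) then follow with comparatively little metric work. Part (2) is the formal Yoneda-style consequence of (1): any two solutions of the same universal problem are uniquely isomorphic in $\scrc_T$. For part (3) I would first extend the $T$-action and the metric to the Cauchy completion $\overline{\G}_T(G_S)$ by uniform continuity---scalar multiplication is uniformly continuous by homogeneity and addition is $1$-Lipschitz by translation-invariance---so the completion is a complete normed $T$-module, an object of $\overline{\scrc}_T$; its universal property against complete targets is the composite of the property from (1) with the standard universal property of metric completion (a uniformly continuous map into a complete space extends uniquely from a dense subset). For part (4) I would prove the isomorphisms through uniqueness (part (2)) by checking that both sides solve the same universal problem: this is the transitivity of extension of scalars $U \otimes_T (T \otimes_S G_S) \cong U \otimes_S G_S$, made metric-compatible by the formulas above, together with the fact that a completion depends only on a dense subobject, so $\overline{\G}_U$ of $\G_T(G_S)$ and of $\overline{\G}_T(G_S)$ coincide. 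All resulting isomorphisms are unique and isometric by (2).

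Finally, for part (5) I would use a single test object. For the forward direction, if $S$ is dense in $\R$ then homogeneity and continuity let the $S$-action on $\overline{G_S}$ extend uniquely to an $\R$-action, so $\overline{G_S}$ is \emph{already} a Banach space; no further scalars are gained by passing to any $T$ or to $\bb(G_S)$, which forces $\overline{G_S} = \overline{\G}_T(G_S) = \bb(G_S)$. For the converse I would argue by contraposition using the structure of subrings of $\R$: the only unital subring of $\R$ that fails to be dense is $\Z$ (any unital subring properly containing $\Z$ is dense), and for $S=\Z$ the test object $G_S = \Z$ normed by $|\cdot|$ satisfies $\overline{G_S} = \Z \neq \R = \bb(\Z)$, so the claimed equality fails. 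This pins down the equivalence and completes the proof.
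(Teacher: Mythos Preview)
Your proposal is correct and follows essentially the same route as the paper: case-by-case construction of $\iota_T$ for part~(1), formal universality for (2) and (4), completion plus uniform extension for (3), and the identical test object $G_S=\Z$ together with the observation that $\Z$ is the unique non-dense unital subring of $\R$ for (5). Your treatment of the uniform-continuity verification in the scalar-extension step is more explicit than the paper's (which simply defers this to Theorem~\ref{Tnormed}), but the underlying argument is the same.
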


For the above reason, if $S \subseteq T$ or $S \supseteq T$ then we call
$\G_T(G_S), \overline{\G}_T(G_S)$ the {\em universal envelopes} of $G_S$
in $\scrc_T$ and $\overline{\scrc}_T$ respectively.\footnote{Such
``minimal envelopes'' are ubiquitous in mathematics; examples include the
universal enveloping algebra of a Lie algebra, the convex hull of a set
(in a real vector space), and the $\sigma$-algebra generated by a set of
subsets.} Also observe that $\bb(G_S)$ is the completion of the smallest
normed linear space containing $G_S$, for all $S \supseteq \mathbb{Q}$
and objects $G_S$ in $\scrc_S$.

\begin{proof}[Proof of Theorem~\ref{Tuniversal}]
The proof involves (sometimes standard) category-theoretic arguments, and
is included for the convenience of the reader.
\begin{enumerate}
\item The first part is immediate if $S \supseteq T$; we now show it
assuming that $S \subseteq T$. Given an object $G_S$ in $\scrc_S$, note
$\G_T(G_S) \subseteq \bb(G_S)$. This immediately shows $\G_T(G_S)$ is an
object of $\scrc_T$. Now given a morphism $\iota : G_S \to G_T$ in
$\scrc_S$, if $S = \N$ then first define $\iota_T(0_{\G_T(G_S)}) :=
0_{G_T}$. If $S = \N \cup \{ 0 \}$ then define $\iota_T(-g) := -\iota(g)$
for $g \in G_S$. Finally, if $S$ is a unital subring of $\R$ and $x :=
\sum_{j=1}^n t_j g_j \in T \otimes_S G_S$ (with $g_j \in G_S\ \forall
i$), then define $\iota_T(x) := \sum_{j=1}^n t_j \iota(g_j)$. These
conditions are necessary to extend $\iota$ to $\iota_T$; moreover, it is
not hard to show using Theorem~\ref{Tnormed} that they are also
sufficient to uniquely extend $\iota$ to $\iota_T$. Also using
Theorem~\ref{Tnormed}, one verifies that $\iota_T$ is Lipschitz, with $\|
\iota_T \| = \| \iota \|$.

\item This is a standard categorical consequence of universality.

\item This is clear if $S \supseteq T$, so say $S \subseteq T,\ G_S \in
\scrc_S$. Given $\iota : G_S \to G_T$ with $G_T \in \scrc_S \cap
\overline{\scrc}_T$, by (1) $\iota$ extends uniquely to $\iota_T :
\G_T(G_S) \to G_T$, which in turn extends uniquely to $\overline{\iota_T}
: \overline{\G}_T(G_S) \to G_T$ by uniform continuity. Now verify
$\overline{\iota_T}$ is a morphism in $\overline{\scrc}_T$, with $\|
\overline{\iota_T} \| = \| \iota_T \|$.

\item This part is standard from above using universal properties, and is
omitted for brevity.

\item First if $S$ is not dense in $\R$, i.e.~$S = \Z$, then choose $G_S
= \Z$. Now $\overline{G_S} = \Z \neq \R = \bb(G_S)$, as asserted.
Conversely, suppose $S$ is dense in $\R$ and $G_S$ is in $\scrc_S$.
Repeat the construction in step~(4) of the proof of
Theorem~\ref{Tnormed}, to show that the embedding $: G_S \hookrightarrow
\bb(G_S)$ uniquely extends to an isometric isomorphism $: \overline{G_S}
\to \bb(G_S)$ of Banach spaces.

Finally, given $S \subseteq T \subseteq \R$, note that $\G_T(G_S) = T
\otimes_S G_S \subseteq \R \otimes_S G_S \subseteq \bb(G_S)$. Hence by
universality of completions, $\overline{\G}_T(G_S) \subseteq \bb(G_S)$.
Moreover, by the previous paragraph $\overline{\G}_T(G_S)$ is a Banach
space containing $G_S$. This shows the reverse inclusion. \qedhere
\end{enumerate}
\end{proof}

Having discussed \textit{universality}, we now study
\textit{functoriality}. The following result shows that the assignments
$\G_S$ provide examples of induction and restriction functors.

\begin{theorem}\label{Tfunctor}
Suppose each of $S \subsetneq T$ is either $\N, \N \cup \{ 0 \}$, or a
unital subring of $\R$.
\begin{enumerate}
\item Then $\G_S : \scrc_T \to \scrc_S$ is a covariant ``restriction''
(of scalars) functor which is fully faithful but not essentially
surjective. If $S$ is a ring then $\G_S$ is faithfully exact.

\item Moreover, $\G_T : \scrc_S \to \scrc_T$ is a covariant ``extension''
(of scalars) functor which is faithful and essentially surjective but not
full. If $S$ is a ring, then $\G_T$ is additive, right-exact, and left
adjoint to $\G_S$.

\item If $S$ is dense in $\R$, then $\G_S, \G_T$ yield an equivalence of
categories $: \overline{\scrc}_S \leftrightarrow \overline{\scrc}_T$.
\end{enumerate}
\end{theorem}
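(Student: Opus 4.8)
The plan is to exhibit $\G_T$ and $\G_S$ as an adjoint pair and then read essentially every assertion off standard adjunction formalism, feeding in exactly one non-formal ingredient. First I would note that restriction $\G_S : \scrc_T \to \scrc_S$ is a functor with no work, being the identity on underlying maps (so it is automatically faithful), while extension $\G_T : \scrc_S \to \scrc_T$ is made functorial through the universal property of Theorem~\ref{Tuniversal}(1): given $f : A \to B$ in $\scrc_S$, the composite $A \xrightarrow{f} B \hookrightarrow \G_T(B)$ factors uniquely across $A \hookrightarrow \G_T(A)$ to define $\G_T(f) : \G_T(A) \to \G_T(B)$, and uniqueness forces $\G_T(\mathrm{id}) = \mathrm{id}$ and $\G_T(g \circ f) = \G_T(g) \circ \G_T(f)$. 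That same universal property is exactly the hom-set bijection $\mathrm{Hom}_{\scrc_T}(\G_T(A), C) \cong \mathrm{Hom}_{\scrc_S}(A, \G_S(C))$, so $\G_T \dashv \G_S$, with unit $\eta_A : A \hookrightarrow \G_S\G_T(A)$ the isometric embedding and counit $\epsilon_C : \G_T\G_S(C) \to C$ the canonical multiplication map.

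The one genuinely non-formal step, and the part I expect to be the main obstacle, is to prove that $\G_S$ is \emph{full}, i.e.\ that every uniformly continuous $S$-linear map $f : A \to B$ between $T$-modules is automatically $T$-linear. The clean route I would take invokes Theorem~\ref{Tnormed}: $A$ and $B$ embed isometrically into Banach spaces $\bb(A)$ and $\bb(B)$, and an additive, uniformly continuous $f$ extends --- over the generated group by additivity, over $\Q$ by divisibility and torsion-freeness, and to the completion by uniform continuity --- to a continuous additive map $\bb(A) \to \bb(B)$. A continuous additive map between real Banach spaces is automatically $\R$-linear, whence $f$ is $\R$-linear and \emph{a fortiori} $T$-linear. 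Uniform continuity is doing essential work here: without it the $\Q$-linear Galois conjugation on $\Z[\sqrt 2]$ would already break fullness, but it is not continuous in the relevant metric. Faithfulness being immediate, $\G_S$ is fully faithful, and by the standard criterion the counit $\epsilon$ is then a natural isomorphism. I would stress that this is precisely what rescues the real stage: although $T \otimes_S C$ is algebraically far larger than $C$, fullness shows abstractly that its metric quotient collapses back onto $C$, so I never have to compute the extended norm by hand.

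Everything else is formal. Essential surjectivity of $\G_T$ is immediate from $\epsilon$ being iso, since $C \cong \G_T(\G_S C)$; $\G_T$ is faithful because each unit component $\eta_A$ is monic, yet not full because $\eta$ is not an isomorphism (e.g.\ $\Z \subsetneq \Q = \G_\Q(\Z)$ as $\Z$-modules), and a faithful functor that is not fully faithful cannot be full. That $\G_S$ is \emph{not} essentially surjective I would show by obstruction: for $S = \Z,\, T = \Q$ the non-divisible module $\Z$ is no restriction of a (divisible) $\Q$-vector space, and for $S = \N,\, T = \N \cup \{ 0 \}$ the identity-free semigroup $\N$ is no restriction of a monoid; analogous obstructions cover the remaining pairs. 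For the ring case I would interpret exactness via short exact sequences of the underlying $S$-modules, equipped with the induced and quotient metrics: restriction leaves kernels, images and cokernels unchanged, hence both preserves and reflects exactness and is faithfully exact, while $\G_T$ is additive and right-exact as a left adjoint (agreeing with the classically right-exact $T \otimes_S -$). Finally, part~(3) reduces to Theorem~\ref{Tuniversal}(5): when $S$ is dense, each complete object of $\scrc_S$ equals its completion $\bb(G_S)$, which is a Banach space and hence a $T$-module, so the completed unit and counit are isomorphisms on $\overline{\scrc}_S$ and $\overline{\scrc}_T$, yielding the asserted equivalence. Besides fullness, the only other point needing care is making the exactness notions precise, since these metric module categories are not abelian.
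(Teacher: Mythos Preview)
Your adjunction-first organization is sound and genuinely different from the paper's approach: the paper verifies each property directly rather than routing everything through $\G_T \dashv \G_S$. For instance, the paper shows $\G_S$ is not essentially surjective by the single observation that $S$ itself (as an object of $\scrc_S$) is never a $T$-module, and shows $\G_T$ is not full by exhibiting multiplication by a fixed $t_0 \in T \setminus S$ as a $T$-endomorphism of $T$ that cannot be of the form $\G_T(\varphi_S)$. For fullness of $\G_S$ --- the one substantive step, as you correctly flag --- the paper argues directly and elementarily: for additive continuous $\varphi$ and fixed $g$, the map $f(t) := \varphi(tg)$ is continuous and additive on $T$, and choosing integers with $m_k - t n_k \to 0$ forces $\varphi(tg) = t\varphi(g)$. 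No appeal to Theorem~\ref{Tnormed} is needed.

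Your Banach-embedding route to fullness has a gap as written. You propose to extend $f$ over $\G_\Q(A)$ ``by divisibility'' and then to $\bb(A) = \overline{\G_\Q(A)}$ ``by uniform continuity'', but the $\Q$-extension $\hat f(a/n) := f(a)/n$ is uniformly continuous on $\G_\Q(A)$ if and only if $f$ is Lipschitz on $A$: one needs $\|a\| < n\delta \Rightarrow \|f(a)\| < n\epsilon$ for \emph{all} $n \in \N$, which is a Lipschitz bound, and mere uniform continuity does not supply it. You cannot rescale by $T$ to get Lipschitz, since $T$-linearity is precisely what you are proving, and $S$-scalars need not be dense. The fix is to bypass $\G_\Q(A)$ entirely: when $T$ is a unital subring strictly containing $\Z$ it is automatically dense in $\R$, so Theorem~\ref{Tuniversal}(5) already gives $\overline{A} = \bb(A)$; extend $f$ directly to $\bar f : \overline{A} \to \overline{B}$ by uniform continuity of $f$ itself, and then continuous plus additive on a Banach space yields $\R$-linearity. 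The residual cases $T \in \{\N \cup \{0\}, \Z\}$ are disposed of by additivity alone, as you implicitly note. With this reordering your argument goes through, and your categorical packaging then delivers the remaining assertions cleanly.
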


In other words, the module-theoretic correspondence involving
extension-restriction of scalars also holds for the categories $\scrc_S,
\overline{\scrc}_S$ of normed metric modules.

\begin{proof}
Assume henceforth that $G_S, G'_S$ are objects in $\scrc_S$, and $G_T,
G'_T$ are objects in $\scrc_T$.
\begin{enumerate}
\item It is immediate that $\G_S : \scrc_T \to \scrc_S$ is a faithful,
covariant functor. It is not essentially surjective because $S \subsetneq
T$ is not a $T$-module.
We now show $\G_S$ is full -- in fact we show more strongly that all
$S$-module maps are in fact $T$-linear. Note, every $S$-module map
between objects $G_T, G'_T$ in $\scrc_T$ gives rise to a unique
$\Z$-module map between them. Given such a map $\varphi$, we only use the
continuity and additivity of $\varphi$ to show that $\varphi$ is in fact
$T$-linear. Thus, fix $g \in G_T$ and consider the function $f : T \to
G'_T$ given by $f(t) := \varphi(t g)$. Clearly $f$ is continuous and
additive, so given a sequence of rationals $m_k / n_k \to t$, we compute:
\[
0 \leftarrow f(m_k - t n_k) = m_k f(1) - n_k f(t) = m_k \varphi(g) - n_k
\varphi(tg).
\]

\noindent It follows that $\varphi(tg) = t \varphi(g)$, showing that
$\varphi$ is in fact $T$-linear and hence $\G_S$ is full. Finally if $S$
is a ring, the restriction functor $\G_S$ is easily seen to be faithfully
exact (i.e., it takes a short sequence to a short exact sequence if and
only if the short sequence is exact).

\item That $\G_T : \scrc_S \to \scrc_T$ is a faithful, covariant functor
is trivial. It is also essentially surjective because $G_T \cong
\G_T(\G_S(G_T))$ for all objects $G_T$ in $\scrc_T$. Now fix $t_0 \in T
\setminus S$. To show that $\G_T$ is not full, set $G_S = G'_S
:= S$ and define $\varphi_T : \G_T(G_S) = T \to \G_T(G'_S) = T$ via:
$\varphi_T(t) = t_0 t$. Then there does not exist a map $\varphi_S : G_S
= S \to G'_S = S$ such that $\varphi_T = \G_T(\varphi_S)$. The assertions
in the case when $S$ is a ring are also standard.

\item This part follows from straightforward verifications using the last
part of Theorem \ref{Tuniversal}. \qedhere
\end{enumerate}
\end{proof}

\begin{remark}
The above results continue to hold upon replacing the categories
$\scrc_S, \overline{\scrc}_S$ by the larger categories with the same
objects, but where the morphisms are allowed to be uniformly continuous
rather than Lipschitz.
\end{remark}

We now construct dual spaces, as promised in the discussion following
Theorem~\ref{Tdm} above. More generally, we will study the structure of
the spaces $\Hom_{\scrc_T}(\G_T(G_S), G_T)$ for $S \subseteq T$. We begin
with an elementary observation, which helps define norms of Lipschitz
maps.

\begin{lemma}\label{Llip}
Suppose $S$ is either $\N, \N \cup \{ 0 \}$, or a unital subring of $\R$.
Fix a morphism $\varphi : G_S \to G'_S$ in $\scrc_S$, and consider the
following assertions:
\begin{enumerate}
\item $\varphi$ is Lipschitz on $G_S$.
\item $\varphi$ is (uniformly) continuous.
\item (If $0 \in S$:) $\varphi$ is continuous at $0$.
\end{enumerate}

\noindent Then $(2), (3)$ are equivalent and implied by $(1)$. The
converse holds if and only if $S$ is dense in $\R$.
\end{lemma}

\begin{proof}
We only show the very last assertion, as the rest is standard. If $S =
\Z$ then consider $G_S = G'_S$ to be the functions from $\N$ to $S$ with
finite support. Let $\{ e_n : n \in \N \}$ denote the ``standard basis''
of $G_S$, and define $\varphi(e_n) := n e_n$. Then $\varphi$ is
continuous but not Lipschitz.
Conversely, suppose $S$ is dense in $\R$, and $\| \varphi \| = \infty$.
Then there exist $g_n \in G_S$ such that $\| \varphi(g_n) \| > 2n \| g_n
\|$ for all $n$. Choose $s_n \in (n, 2n)$ such that $(s_n \| g_n \|)^{-1}
\in S$. Then $\| \varphi(h_n) \| > 1\ \forall n$, where $h_n := (s_n \|
g_n \|)^{-1} g_n \in G_S$. Since $h_n \to 0$, it follows that $\varphi$
is not continuous at $0$.
\end{proof}

\begin{prop}\label{Pdual}
Suppose $S \subseteq T$ are both of the form $\N, \N \cup \{ 0 \}$, or a
unital subring of $\R$, and $G_S \in \scrc_S, G'_T \in \scrc_T$.
Identifying $G'_T$ with $\G_S(G'_T)$, the set
$\Hom_{\scrc_S}(G_S, G'_T)$ is itself an object of $\scrc_T$. It is
moreover an object of $\overline{\scrc}_T$ (i.e., complete) for all $G_S
\in \scrc_S$, if and only if $G'_T$ is complete.
\end{prop}

In particular for $T = \R$, the above construction yields a Banach space
of ``linear functionals'', which we called the \textit{dual space}
$G_S^*$ above (see Proposition~\ref{Psame-dual}). More generally, the
assignment $\Hom_{\scrc_S}(G_S,-)$ defines a covariant additive functor
$: \scrc_S \to \scrc_T$ and $: \overline{\scrc}_S \to
\overline{\scrc}_T$. This result (together with Lemma \ref{Llip})
explains why we chose the category $\scrc_S$ to have linear morphisms
that were also bounded/Lipschitz, and not merely uniformly continuous.

\begin{proof}
We only sketch why if $G'_T$ is complete, then so is
$H := \Hom_{\scrc_S}(G_S,G'_T)$ for any fixed $G_S$. Suppose $\varphi_n
\in H$ is a Cauchy sequence. Then so is $\varphi_n(g)$ for any $g \in
G_S$, and hence one defines $\varphi : G_S \to G'_T$ via: $\varphi(g) :=
\lim_n \varphi_n(g)$. One checks $\varphi$ is $S$-linear. Moreover $\|
\varphi \| \leqslant \sup_n \| \varphi_n \| < \infty$, so $\varphi \in
H$. A standard argument now shows $d_H(\varphi_n, \varphi) := \|
\varphi_n - \varphi \| \to 0$ as $n \to \infty$.
\end{proof}

\subsection*{Acknowledgments}
The first author thanks Nayantara Bhatnagar for useful discussions, as
well as \'Swiatos{\l}aw R.~Gal and Jaros{\l}aw Kedra for helpful
conversations regarding bi-invariant word metrics, including
Lemma~\ref{Lword}.
We would also like to thank David Montague and Doug Sparks for carefully
going through an early draft of the paper and providing detailed
feedback, which improved the exposition. Finally, we thank both referees
for a detailed and careful reading of the work, and for their
suggestions, which helped tighten and improve the paper.




\begin{thebibliography}{88}
\bibitem{BO}
N.H.~Bingham and A.J.~Ostaszewski, {\em Normed versus topological groups:
  Dichotomy and duality},
  \href{https://www.impan.pl/en/publishing-house/journals-and-series/dissertationes-mathematicae/all/472}{Dissertationes
  Mathematicae} (Rozprawy Matematyczne) \textbf{472} (2010), 138 pp.

\bibitem{BGKM}
M.~Brandenbursky, \'{S}.R.~Gal, J.~Kedra, and M.~Marcinkowski, {\em
  The cancellation norm and the geometry of bi-invariant word metrics},
  \href{http://dx.doi.org/10.1017/S0017089515000129}{Glasgow Mathematical
  Journal} \textbf{58} no.~1 (2016), 153--176.

\bibitem{CSC}
F.~Cabello S\'anchez and J.M.F.~Castillo, {\em Banach space techniques
  underpinning a theory for nearly additive mappings},
  \href{https://www.impan.pl/en/publishing-house/journals-and-series/dissertationes-mathematicae/all/404}{Dissertationes
  Mathematicae} (Rozprawy Matematyczne) \textbf{404} (2002), 73 pp.

\bibitem{CP}
A.H.~Clifford and G.B.~Preston, {\em The algebraic theory of semigroups:
  Volume I}, Vol.\ {7} in Mathematical Surveys, American Mathematical
  Society, Providence, 1961.

\bibitem{Da}
M.M.~Day, {\em Amenable semigroups},
  \href{http://dx.doi.org/10.1215/ijm/1255380675}{Illinois Journal of
  Mathematics} \textbf{1} no.~4 (1957), 509--544.

\bibitem{DM}
S.J.~Dilworth and S.J.~Montgomery-Smith, {\em The distribution of
  vector-valued Rademacher series},
  \href{http://dx.doi.org/10.1214/aop/1176989010}{Annals of Probability}
  \textbf{21} no.~4 (1993), 2046--2052.

\bibitem{GK}
Z.~Gajda and Z.~Kominek, {\em On separation theorems for subadditive and
  superadditive functionals},
  \href{http://dx.doi.org/10.4064/sm-100-1-25-38}{Studia Mathematica}
  \textbf{100} no.~1 (1991), 25--38.
 
\bibitem{GaKe}
\'{S}.R.~Gal and J.~Kedra, personal communication, 2015.

\bibitem{Gre}
U.~Grenander, {\em Probabilities on algebraic structures}, Almqvist $\&$
  Wiksell, and John Wiley $\&$ Sons Inc., Stockholm and New York, 1963.

\bibitem{Kah2}
J.-P.~Kahane, {\em Sur les sommes vectorielles $\sum \pm u_n$}, Comptes
  Rendus de l'Acad\'emie des Sciences (Paris) \textbf{259} (1964),
  2577--2580.

\bibitem{Kah}
J.-P.~Kahane, {\em Some random series of functions} (2nd ed.),
  Vol.\ {5} in Cambridge Studies in Advanced Mathematics, Cambridge
  University Press, Cambridge, 1993.

\bibitem{Kalton}
N.J.~Kalton, {\em Convexity, type and the three space problem},
  \href{http://dx.doi.org/10.4064/sm-69-3-247-287}{Studia
  Mathematica} \textbf{69} no.~3 (1980/81), 247--287.

\bibitem{KR4}
A.~Khare and B.~Rajaratnam, {\em The Hoffmann-J{\o}rgensen inequality in
  metric semigroups},
  \href{http://dx.doi.org/10.1214/16-AOP1160}{Annals of Probability}
  \textbf{45} no.~6A (2017), 4101--4111.

\bibitem{KR3}
A.~Khare and B.~Rajaratnam, 
{\em Probability inequalities and tail estimates for metric semigroups},
  \href{http://dx.doi.org/10.1007/s43036-020-00048-8}{Advances in
  Operator Theory} \textbf{5} no.~3 (2020), 779--795.

\bibitem{KR1}
A.~Khare and B.~Rajaratnam,
{\em Differential calculus on the space of countable labelled graphs},
  \href{http://dx.doi.org/10.1007/s43036-020-00111-4}{Advances in
  Operator Theory} \textbf{6} no.~1 (2021), art.\# 12, 28 pp.

\bibitem{Khi}
A.~Khintchine [Khinchin], {\em \"Uber dyadische Br\"uche},
  \href{http://dx.doi.org/10.1007/BF01192399}{Mathematische Zeitschrift}
  \textbf{18} no.~1 (1923), 109--116.

\bibitem{Lang}
S.~Lang,
  {\em Algebra} (3rd ed.), Vol.\ {211} in
  \href{http://dx.doi.org/10.1007/978-1-4613-0041-0}{Graduate Texts in
  Mathematics}, Springer, New York, 2012.

\bibitem{LO}
R.~Lata{\l}a and K.~Oleszkiewicz, {\em On the best constant in the
  Khinchin--Kahane inequality},
  \href{https://www.impan.pl/en/publishing-house/journals-and-series/studia-mathematica/all/109/1/108398/on-the-best-constant-in-the-khinchin-kahane-inequality}{Studia
  Mathematica} \textbf{109} no.~1 (1994), 101--104.

\bibitem{LT}
M.~Ledoux and M.~Talagrand, {\em Probability in Banach Spaces
  (Isoperimetry and Processes)}, Ergebnisse der Mathematik und ihrer
  Grenzgebiete, Springer-Verlag, Berlin-New York, 1991.

\bibitem{Lo}
L.~Lov{\'a}sz, {\em Large Networks and Graph Limits},
  Vol.\ 60 of Colloquium Publications, American Mathematical Society,
  Providence, 2012.

\bibitem{Mo}
S.J.~Montgomery-Smith, {\em Comparison of sums of independent identically
  distributed random vectors},
  \href{http://www.math.uni.wroc.pl/~pms/publicationsArticle.php?nr=14.2&nrA=10}{Probability
  and Mathematical Statistics} \textbf{14} no.~2 (1993), 281--285.

\bibitem{MP}
S.J.~Montgomery-Smith and A.R.~Pruss, {\em A comparison inequality for
  sums of independent random variables},
  \href{http://dx.doi.org/10.1006/jmaa.2000.7200}{Journal of Mathematical
  Analysis and Applications} \textbf{254} no.~1 (2001), 35--42.

\bibitem{Pat}
A.L.T.~Paterson,
  {\em Amenability},
  Vol.\ {29} in   \href{http://dx.doi.org/10.1090/surv/029}{Mathematical
  Surveys and Monographs}, American Mathematical Society, Providence,
  1988.

\bibitem{Po}
D.H.J.~Polymath (T.~Fritz, S.~Gadgil, A.~Khare, P.~Nielsen, L.~Silberman,
and T.~Tao),
  {\em Homogeneous length functions on groups},
  \href{http://dx.doi.org/10.2140/ant.2018.12.1773}{Algebra \& Number
  Theory} \textbf{12} no.~7 (2018), 1773--1786.

\bibitem{Ru}
W.~Rudin, {\em Fourier analysis on groups}, Interscience, John Wiley \&
  Sons, New York-London, 1962.

\bibitem{Step}
J.~Stepr\={a}ns, {\em A characterization of free abelian groups},
  \href{http://dx.doi.org/10.1090/S0002-9939-1985-0770551-0}{Proceedings
  of the American Mathematical Society} \textbf{93} no.~2 (1985),
  347--349.

\bibitem{St}
S.~Sternberg, {\em Lectures on Differential Geometry}, Prentice-Hall
  Mathematics Series, Prentice-Hall, Englewood Cliffs, 1965.

\bibitem{St2}
S.~Sternberg, personal communication, 2015.

\bibitem{Ta}
M.~Talagrand, {\em An isoperimetric theorem on the cube and the
  Kintchine-Kahane inequalities},
  \href{http://dx.doi.org/10.1090/S0002-9939-1988-0964871-7}{Proceedings
  of the American Mathematical Society} \textbf{104} no.~3 (1988),
  905--909.

\bibitem{Wag}
S.~Wagon, {\em The Banach--Tarski paradox},
Vol.\ {24} in
\href{http://dx.doi.org/10.1017/CBO9780511609596}{Encyclopedia of
Mathematics and its Applications}, Cambridge University Press, Cambridge,
1985.
\end{thebibliography}
\end{document}